\documentclass[preprint,12pt]{elsarticle}
\usepackage{amsfonts}
\usepackage{stmaryrd}
\usepackage{graphicx}
\usepackage{epsfig}
\usepackage{xcolor}
\usepackage{amsmath}
\usepackage{amsthm}
\usepackage{amssymb}
\usepackage{mathrsfs}
\usepackage{float}
\usepackage{tikz}
\usepackage{multirow}
\usepackage{verbatim}
\usepackage{fancyhdr}
\usepackage{color}
\usepackage{mathtools}
\usepackage{mathrsfs}
\usepackage{sectsty}
\usepackage[title]{appendix}
\usepackage{threeparttable}
\usepackage{dcolumn}
\usepackage{subcaption}
\usepackage{booktabs}
\usepackage{indentfirst}
\usepackage{setspace}
\usepackage{bm}
\usepackage{enumerate}
\usepackage{lineno}
\usepackage{bbm}
\usepackage[labelfont=bf,labelsep=period]{caption}
\newcolumntype{d}[1]{D{.}{.}{#1}}
\usepackage{hyperref}
\hypersetup{colorlinks,linkcolor=blue,citecolor=blue}

\newtheorem{lemma}{Lemma}[section]

\newtheorem{Remark}{Remark}[section]
\newtheorem{theorem}{Theorem}[section]

\allowdisplaybreaks

\journal{Journal of Computational Physics}

\begin{document}

\begin{frontmatter}

\title{A bound-preserving oscillation-eliminating 
discontinuous Galerkin  
method with operator splitting 
for solving Kapila's five-equation model}

\cortext[cor1]{Corresponding authors. E-mail: yunlongliu@hrbeu.edu.cn, zhangaman@hrbeu.edu.cn}
\author[aff1]{Jia-Jun Zou}

\author[aff2]{Yu-Chang Liu}

\author[aff3]{Fan Zhang}

\author[aff1]{Qi Kong}

\author[aff1]{Yun-Long Liu\corref{cor1}}

\author[aff1]{A-Man Zhang\corref{cor1}}

\affiliation[aff1]{
    organization={College of Shipbuilding Engineering, Harbin Engineering University},
    city={Harbin},
    postcode={150001}, 
    country={P.R. China}
}

\affiliation[aff2]{
    organization={School of Mathematical Sciences, University of Science and Technology of China},
    city={Hefei},
    postcode={230026}, 
    country={P.R. China}  
}

\affiliation[aff3]{
    organization={School of Mathematics and Physics, University of Science and Technology Beijing},
    city={Beijing},
    postcode={100083}, 
    country={P.R. China}
}
\begin{abstract}
This paper proposes a robust 
operator-splitting discontinuous Galerkin (DG) framework 
to overcome the severe stiffness-induced instabilities 
in simulating compressible two-phase flows governed by 
Kapila's five-equation model with the Tammann equation of state. 
Specifically, the system is decoupled into 
a five-equation transport model and a stiff $\kappa$-source term. 
The former is discretized via a 
quasi-conservative DG method \cite{cheng2020quasi}, 
while the latter is resolved 
by the local DG method combined with 
a novel adaptive implicit strategy that 
hybridizes the backward Euler and second-order singly 
diagonally implicit Runge-Kutta schemes. 
This implicit strategy possesses the unconditionally bound-preserving property, 
and thus effectively circumvents the 
severe stability constraints and time-step penalties inherent 
in traditional explicit schemes.
Furthermore, to enhance computational robustness, 
we integrate an oscillation-eliminating DG (OEDG) procedure to suppresses spurious oscillations 
without characteristic decomposition, complemented 
by a bound-preserving limiter to maintain 
physically admissible numerical solutions. 
We also prove that the proposed operator-splitting DG framework, 
integrated with the oscillation-eliminating limiter, and 
the bound-preserving limiter, strictly satisfies the Abgrall condition.
Finally, extensive numerical experiments 
are conducted to demonstrate the superior robustness and efficiency of the method.
\end{abstract}



\begin{keyword}
Kapila's five-equation model \sep compressible two-phase flows \sep discontinuous Galerkin method \sep bound-preserving limiter.
\end{keyword}
\end{frontmatter}


\section{Introduction}
Multiphase flows exist in various engineering fields such as aerospace, energy, and environmental sciences. 
Mathematical models for such flows can be classified as either two-fluid models or homogeneous models.
Among the two-fluid models, the Baer-Nunziato model \cite{baer1986two}, which includes mass, momentum, and energy conservation laws for each phase and a volume fraction equation, was first proposed to simulate the deflagration-to-detonation transition in reactive granular materials.
Then, Kapila's five-equation model \cite{kapila2001two} was 
derived as a reduced model under the assumption of 
velocity and pressure equilibrium across phases. 
Compared to the five-equation transport 
model \cite{ALLAIRE2002577,massoni2002proposition}, 
Kapila's model contains an additional 
non-conservative $\kappa$-source term in the volume 
fraction equation that accounts for the different 
compressibility of the two fluids. This enables 
it to achieve higher fidelity in bubble dynamics 
simulation \cite{schmidmayer2020assessment,tiwari2013diffuse,saurel2009simple,beig2018temperatures}.


In recent years, the application of the discontinuous Galerkin (DG) method to multiphase flows has made significant progress. For instance, Cheng et al. \cite{cheng2020quasi} proposed a high-order quasi-conservative DG method to solve the five-equation transport model with the Tammann equation of state (EOS). They subsequently considered Kapila's model based on the ideal gas EOS \cite{zhang2023analysis}, in which the non-conservative source terms are discretized using a simple finite volume framework that achieves only second-order accuracy. 
To address this, Yan et al. \cite{yan2024uniformly} further developed a DG spatial discretization for Kapila's model that successfully achieves optimal high-order accuracy for the source terms. Despite these advances in spatial discretization, simulating gas-liquid two-phase flows by solving Kapila's model remains a challenging problem due to the stiffness introduced by the $\kappa$-source term.

In fact, when a shock wave in water impinges on an air bubble interface, the velocity divergence at the gas–water interface becomes extremely large. This imposes severe restrictions on the allowed time step to maintain bounded volume fractions, often reducing it by several orders of magnitude \cite{zhang2023analysis}. White et al. \cite{white2025high} incorporated the consistent and conservative phase-field formulation into the DG framework to achieve high-order accuracy, where the bound-preserving analysis is also heavily dominated by the stiff $\kappa$-source term. To overcome this stiffness, attempts have been made using alternative approaches. Notable examples include the acoustic-convective splitting-based scheme by Eikelder et al. \cite{ten2017acoustic} and the second-order generalized Riemann problem based finite volume scheme with a semi-implicit time discretization by Chen et al. \cite{chen2025generalized}. However, addressing this issue with high-order DG method has remained unexplored.

In this paper, to circumvent the stiffness-induced 
instability of the $\kappa$-source term, we propose an operator-splitting DG 
framework that 
decomposes the system into a five-equation transport model 
and the $\kappa$-source term. Specifically,
the former is discretized using a quasi-conservative DG method, 
while the latter is discretized using the local DG (LDG) method with
a novel adaptive 
implicit strategy that hybridizes the 
backward Euler and second-order singly 
diagonally implicit Runge-Kutta (SDIRK) schemes, 
ensuring the unconditional bound-preserving property. 
Moreover, we also implement an OEDG 
procedure to suppress numerical oscillations, 
along with an effective bound-preserving limiter 
to ensure the physical constraints of numerical solution. 
Finally, the proposed operator-splitting DG framework, 
integrated with the oscillation-eliminating limiter, and 
the bound-preserving limiter are proved 
to maintain pressure and velocity 
equilibrium at interfaces for the Tammann EOS, 
and its performance is thoroughly 
validated by a series of benchmark 
and challenging test cases in both one and two dimensions. 

The paper is organized as follows.
The governing equations of Kapila's five-equation model are given
in Section \ref{sec-kapila-eq}. The operator-splitting DG 
framework is presented in detail in Section \ref{sec-split}.
The implementation of the BP limiter and OEDG procedure 
is described in Section \ref{sec-limiters}. 
Section \ref{sec-experiments} provides numerical 
results to verify the performance of the proposed framework. 
Finally, Section \ref{sec-conclusion} draws the conclusion remarks.

\section{Governing equations}\label{sec-kapila-eq}

This work considers the immiscible compressible gas-liquid 
two-phase flow described by the following Kapila's 
five-equation model \cite{kapila2001two}
\begin{equation}\label{kapila5eq}
\begin{aligned}
    \frac{\partial}{\partial t}
    \begin{pmatrix}
        z_1\rho_1\\
        z_2\rho_2 \\
        \rho \mathbf{u} \\
        E
    \end{pmatrix}
    &+ \nabla \cdot
    \begin{pmatrix}
        z_1\rho_1 \mathbf{u} \\
        z_2\rho_2 \mathbf{u} \\
        \rho \mathbf{u}^2 + p\mathbf{I}  \\
        \mathbf{u}(E + p) 
    \end{pmatrix}
    = \bm{0}, \\
    \frac{\partial z_1}{\partial t} &+ \mathbf{u}\cdot\nabla z_1 = \kappa\nabla \cdot \mathbf{u}.
\end{aligned}
\end{equation}
Here, $\rho$ denotes the mixture density, $z_k$ the volume fraction of 
phase $k$, $\rho_k$ the density of phase $k$, $p$ the common 
pressure shared by both phases, $\mathbf u$ the velocity, 
$E$ the total energy, and $e$ the specific internal energy of the mixture. 
$\mathbf I$ represents the $d\times d$ identity matrix. The mixture properties are defined as follows
\begin{equation}\label{mixture_proper}
\begin{gathered}
\rho = z_1\rho_1 + z_2\rho_2, \quad 
z_1+z_2=1, \quad 
p = p_1= p_2, \quad 
\rho e = z_1\rho_1 e_1 + z_2\rho_2 e_2.
\end{gathered}
\end{equation}
The parameter $\kappa$ in the volume fraction equation is defined as
\begin{equation}
    \kappa = z_1 z_2 \frac{\nu_1-\nu_2}{\nu},\quad \nu_k = 
    \frac{1}{\rho_k c_k^2},\quad \nu = z_1\nu_1 + z_2\nu_2,
\end{equation}
where $\nu_k$ and $\nu$ are the compressibilities of phase $k$ and the mixture, respectively.
If the $\kappa$-source term $\kappa\nabla \cdot \mathbf{u}$ is omitted, the model 
reduces to the five-equation transport model.
 
The system \eqref{kapila5eq} is closed via the Tammann EOS. 
For each phase, it is given by
\begin{equation}
    p_k = \rho_k e_k(\gamma_k-1)-\gamma_k p_{w,k},
\end{equation}
where $\gamma_k$ is the specific heat ratio and $p_{w,k}$ is the reference pressure for phase $k$. The speed of sound for each phase can be expressed as
\begin{equation}
    c_k = \sqrt{\gamma_k (p+p_{w,k})/\rho_k}.
\end{equation}
To define the mixture parameters, we introduce the following relations
\begin{equation}\label{eq:mixture_parameters}
\frac{1}{\gamma-1}=\frac{z_1}{\gamma_1-1}+\frac{z_2}{\gamma_2-1},\quad 
\frac{\gamma p_w}{\gamma-1}=\frac{z_1\gamma_1 p_{w,1}}{\gamma_1-1}+\frac{z_2\gamma_2 p_{w,2}}{\gamma_2-1},
\end{equation}
where $\gamma$ and $p_w$ are the effective specific heat ratio and reference pressure for the mixture, respectively. Accordingly, the mixture pressure can be written as
\begin{equation}
    p = \rho e(\gamma-1)-\gamma p_{w}.
\end{equation}
The mixture speed of sound for Kapila's model, which obeys Wood's formula, is given by
\begin{equation}
    \frac{1}{\rho c^2} = \frac{z_1}{\rho_1 c_1^2} + \frac{z_2}{\rho_2 c_2^2},
\end{equation}
and the mixture speed of sound for the five-equation transport model is given by
\begin{equation}
    \tilde{c} = \sqrt{\gamma (p+p_{w})/\rho}.
\end{equation}

\begin{Remark}
Consider Kapila's five-equation model and the 
five-equation transport model closed by the Tammann EOS. The two models are acoustically identical (i.e. $ c = \tilde{c}$) in single-phase regions where $\kappa = 0$. In other cases, the relationship between these two acoustic limits is rigorously given by \cite{zhang2023analysis}
\begin{equation}\label{eq:sound_speed_relation}
    \rho c^2 = \rho \tilde{c}^2 - \kappa (\gamma - 1)(\rho_2 e_2 - \rho_1 e_1).
\end{equation}
\end{Remark}

The system \eqref{kapila5eq} can also be written concisely as
\begin{equation}\label{eq:compact_form_kapila}
\begin{aligned}
\frac{\partial \mathbf{W}}{\partial t}&+\nabla \cdot \mathbf{F}(\mathbf W,z_1)= 0,\\
\frac{\partial z_1}{\partial t} &+ \mathbf{u}\cdot\nabla z_1 = \kappa\nabla \cdot \mathbf{u},
\end{aligned}
\end{equation}
where $\mathbf W=(z_1\rho_1,z_2\rho_2,\rho\mathbf u, E)^\mathrm{T}$, $\mathbf F = ( z_1\rho_1 \mathbf{u},
        z_2\rho_2 \mathbf{u},
        \rho \mathbf{u}^2 + p\mathbf{I},
        \mathbf{u}(E + p) )^\mathrm{T}$.
In addition, we define $\mathbf U=(z_1\rho_1,z_2\rho_2,\rho\mathbf u, E, z_1)^\mathrm{T}$.

\section{Operator-splitting DG framework}\label{sec-split}
\subsection{Operator-splitting approach for Kapila's model}
When shocks and rarefaction waves hit a material interface, 
the $\kappa$-source term may become a stiff term in the volume fraction equation. 
To address this, we split Kapila's model into 
two parts. The first part considers only the five-equation transport model, 
and the second part 
considers the $\kappa$-source term. Specifically, the first part is

\begin{equation}\label{eq:RHS}
\mathbf{U}_t = \text{RHS}(\mathbf{U}),
\end{equation}
where the expression of $\text{RHS}(\mathbf{U})$ is given by

\begin{equation}
\text{RHS}(\mathbf{U}) = 
    \begin{pmatrix}
       -\nabla \cdot z_1\rho_1 \mathbf{u} \\
       -\nabla \cdot z_2\rho_2 \mathbf{u} \\
       -\nabla \cdot \rho \mathbf{u}^2 - p\mathbf{I}  \\
       -\nabla \cdot \mathbf{u}(E + p) \\
       -\mathbf{u} \cdot \nabla z_1
    \end{pmatrix}.
\end{equation}
And the second part is
\begin{equation}\label{kappa}
\mathbf{U}_t = \mathbf{S}(\mathbf{U}),
\end{equation}
where the expression of $\mathbf{S}(\mathbf{U})$ is given by
\begin{equation}
\mathbf{S}(\mathbf{U})=
    \begin{pmatrix}
       0 \\
       0 \\
       \mathbf{0}  \\
       0 \\
       \kappa\nabla\cdot\mathbf u.
    \end{pmatrix}
\end{equation}

In this work, we use the second-order Strang splitting for Kapila's model \cite{strang1968construction}
\begin{align*}
&\text{Step 1 : } \frac{\mathbf{U}_1 - \mathbf{U}^n}{\Delta t_1} = \mathbf{S}(\mathbf{U}_1), & &\Delta t_1 = \Delta t/2, && \mathbf{U}_{1,0} = \mathbf{U}^n, \\
&\text{Step 2 : } \frac{\mathbf{U}_2 - \mathbf{U}_1}{\Delta t_2} = \text{RHS}(\mathbf{U}_1), & &\Delta t_2 = \Delta t, && \mathbf{U}_{2,0} = \mathbf{U}_1, \\
&\text{Step 3 : } \frac{\mathbf{U}^{n+1} - \mathbf{U}_2}{\Delta t_3} = \mathbf{S}(\mathbf{U}^{n+1}), & &\Delta t_3 = \Delta t/2, && \mathbf{U}_0^{n+1} = \mathbf{U}_2,
\end{align*}
where the superscript `$n$' denotes the time instant $t = t^n$, and the subscript 0 of $\mathbf{U}_0$, represents the initial condition of $\mathbf{U}$. 
Here, Step 1 and Step 3 correspond to 
implicitly solving the $\kappa$-source term, 
while Step 2 corresponds to explicitly solving 
the five-equation transport equation. 
We note that our framework will be a BP-OEDG method for the five-equation transport model when we omit Step 1 and Step 3. 
In Step 2, high order time discretization method 
is needed to achieve high-order temporal accuracy. 
Notably, Strang splitting achieves a good equilibrium between accuracy and 
efficiency \cite{strang1968construction}, and we refer to \cite{sheng1989solving,suzuki1991general} for higher-order operator-splitting methods.

\begin{Remark}
    Although the second-order Strang 
    splitting is limited to $O(\Delta t^2)$ 
    temporal accuracy, this constraint 
    primarily applies to two-phase flow 
    regions. In single-phase flow regions, 
    since Step 1 and Step 3 can be bypassed, 
    the algorithm effectively reduces to Step 2 alone, and thus preserves high-order fidelity in the smooth, single-phase regions.
\end{Remark}

\subsection{quasi-conservative DG for the five-equation transport model}
In this work, we use the quasi-conservative DG spatial discretization \cite{cheng2020quasi}  for solving the five-equation transport model. 
For the sake of completeness, we give a brief review of the quasi-conservative DG method for the five-equation transport model.

We first introduce some basic notation. Let $\Omega$ be the computational domain, which is 
composed of $N_x \times N_y$ uniform Cartesian elements $I_{i,j} = [x_{i-\frac{1}{2}}, x_{i+\frac{1}{2}}] \times [y_{j-\frac{1}{2}}, y_{j+\frac{1}{2}}]$ for 
$1 \le i \le N_x$ and $1 \le j \le N_y$. The mesh is denoted as $ \Omega_h=\{I_{ij}\}$, and the element size is
$\Delta x = x_{i+\frac{1}{2}}-x_{i-\frac{1}{2}}$ and $\Delta y = y_{j+\frac{1}{2}}-y_{j-\frac{1}{2}}$.
 The discontinuous finite element space is defined as follows
\begin{equation}
V_h^K := \{ v_h \in L^2(\Omega) : v_h|_{I_{i,j}} \in P^K(I_{i,j}), \, \forall I_{i,j} \in \Omega_h \}.
\end{equation}
Then, we can express the DG($P^K$) approximate solution of the vector
 $\mathbf U$ as follows
\begin{equation}
\mathbf{U}_h(x, y) = \sum_{m=0}^{C^K - 1} \mathbf{U}_{i,j}^{(m)} \varphi_{i,j}^{(m)}(x,y), \quad \forall (x, y) \in I_{i,j},
\end{equation}
where \( \mathbf{U}_h(x,y) \) is the approximate solution.
\( \varphi_{i,j}^{(m)} \in P^K(I_{i,j}) \) and \( \mathbf{U}_{i,j}^{(m)} \) are 
the $m$-th basis function and the corresponding coefficient, respectively, 
and \( C^K \) is the total number of coefficients. In this work, 
we use the Legendre local orthogonal basis functions which result 
in the following cell-averaged approximate solution
\begin{equation}
\overline{\mathbf{U}}_{i,j} := \frac{1}{|I_{i,j}|} \int_{I_{i,j}} \mathbf{U}_h \, \mathrm dx \mathrm dy = \mathbf{U}_{i,j}^{(0)}.
\end{equation}
Because of the discontinuous nature of variables on element interfaces $\mathcal F$,
we need to define $\mathbf U ^-$ and $\mathbf U ^+$ on element interfaces $\mathcal F$, 
which means the solutions taken from the left/lower and right/upper element, 
respectively, see Fig.~\ref{fig:combined} (\subref{fig:GL}) for
an illustration, where the red points are Gauss-Lobatto points.

For five-equation transport model, its compact form as follows:
\begin{equation}\label{eq:compact_form_Allaire}
\begin{aligned}
\frac{\partial \mathbf{W}}{\partial t}+\nabla \cdot \mathbf{F}(\mathbf W,z_1)= 0,\\
\frac{\partial z_1}{\partial t} + \mathbf{u}\cdot\nabla z_1 = 0.
\end{aligned}
\end{equation}
For the conservative variables $\mathbf W$, the spatial
 discretization in element $I_{i,j}$ is given by
\begin{equation}
   \int\limits_{I_{i,j}}{\frac{\partial \mathrm{\mathbf{W}}}{\partial t}\varphi_{i,j}^m \, \mathrm dS} = 
   \int\limits_{I_{i,j}} \mathbf{F}(\mathrm{\mathbf{W}},z_1)  \cdot \nabla \varphi_{i,j}^m \, \mathrm dS-
   \int\limits_{\partial I_{i,j}}{ \mathbf{\hat{F}}(\mathrm{\mathbf{W}},z_1)   \cdot \mathbf{n} \varphi_{i,j}^m \, \mathrm dl}.
\label{eq:Allaire-w}
\end{equation}
Denoting $\mathbf F=(\mathbf F^1,\mathbf F^2)$, the numerical flux is determined by the following Lax-Friedrichs numerical flux
\begin{equation}
\mathbf{\hat{F}}^d(\mathbf{W},z_1) = \frac{1}{2} \left( \mathbf{F}^d(\mathbf{W^+},z_1^+) + \mathbf{F}^d(\mathbf{W^-},z_1^-) \right) - \frac{S_d}{2} (\mathbf{W^+} - \mathbf{W^-}),
\end{equation}
where $d$= 1 or 2, depending on the direction of the flux, and $S_d$ is the estimate of the characteristic speed defined as
\begin{equation} 
 S_1 = \max\left\{ \lvert u^+ \rvert + \tilde{c}^+, \lvert u^- \rvert + \tilde{c}^- \right\}, \quad S_2 = \max\left\{ \lvert v^+ \rvert + \tilde{c}^+, \lvert v^- \rvert + \tilde{c}^- \right\} .
 \end{equation}

For the non-conservative variable $z_1$, the quasi-conservative 
spatial discretization in element $I_{i,j}$ is given as
\begin{equation}
\begin{aligned}
\int\limits_{I_{i,j}}{\frac{\partial z_1}{\partial t}\varphi_{i,j}^m \, \mathrm dS} 
= - \int\limits_{I_{i,j}}\varphi_{i,j}^m \mathbf{u} \cdot \nabla z_1 \, \mathrm dS-
\int\limits_{\partial I_{i,j}} \varphi_{i,j}^m 
\widehat{(\mathbf{u} \cdot \mathbf{n}z_1) } \, \mathrm dl +
\int\limits_{\partial I_{i,j}^{in}} \varphi_{i,j}^m(\mathbf{u}  \cdot 
\mathbf{n}z_1) \, \mathrm dl,
\end{aligned}
\label{eq:Allaire-z}
\end{equation}
where $\partial I_{i,j}$ and $\partial I_{i,j}^{in}$ are 
the boundary and inner boundary of element $I_{i,j}$, 
respectively, see Fig.~\ref{fig:combined} (\subref{fig:inner}) for
an illustration.

\begin{figure}[htbp]
    \centering
    \begin{subfigure}[b]{0.48\textwidth}
        \centering
        \begin{tikzpicture}[scale=1.5]
            
            \draw[thick] (0,0) rectangle (3,3);
            
            \def\gla{0} 
            \def\glb{0.82918} 
            \def\glc{2.17082} 
            \def\gld{3} 
            
            \foreach \x in {\gla, \glb, \glc, \gld} {
                \foreach \y in {\gla, \glb, \glc, \gld} {
                    \fill[red] (\x, \y) circle (1.5pt); 
                }
            }
            
            \node[below] at (0, -0.2) {$x_{i-\frac{1}{2}}$};
            \node[below] at (3, -0.2) {$x_{i+\frac{1}{2}}$};
            \node[left]  at (-0.2, 0) {$y_{j-\frac{1}{2}}$};
            \node[left]  at (-0.2, 3) {$y_{j+\frac{1}{2}}$};
            
            \node at (-0.25, 1.5) {$-$}; 
            \node at (0.15, 1.5) {$+$}; 
            \node at (2.85, 1.5) {$-$}; 
            \node at (3.25, 1.5) {$+$};  
            
            \node at (1.5, -0.25) {$-$}; 
            \node at (1.5, 0.15) {$+$};  
            \node at (1.5, 2.85) {$-$}; 
            \node at (1.5, 3.25) {$+$};  

        \end{tikzpicture}
        \caption{}
        \label{fig:GL}
    \end{subfigure}
    \hfill
    \begin{subfigure}[b]{0.48\textwidth}
        \centering
        \begin{tikzpicture}[scale=1.5]
            
            \draw[thick] (0,0) rectangle (3,3);
            
            \draw[thick, dashed, blue] (0.1, 0.1) rectangle (2.9, 2.9);
            
            \node[below] at (0, -0.1) {$x_{i-\frac{1}{2}}$};
            \node[below] at (3, -0.1) {$x_{i+\frac{1}{2}}$};
            \node[left]  at (-0.1, 0) {$y_{j-\frac{1}{2}}$};
            \node[left]  at (-0.1, 3) {$y_{j+\frac{1}{2}}$};
            
            \node[above] at (1.5, 3) {$\partial I_{i,j}$};
            \node[below, blue] at (1.5, 2.8) {$\partial I_{i,j}^{in}$};

        \end{tikzpicture}
        \caption{}
        \label{fig:inner}
    \end{subfigure}
    
    \caption{Element $I_{i,j}$ illustrations: (a) 4x4 Gauss-Lobatto
     points (b) boundary $\partial I_{i,j}$ and inner boundary $\partial I_{i,j}^{in}$}
    \label{fig:combined}
\end{figure}

The integral over the inner boundary of element $I_{i,j}$ is discretized by:
\begin{equation}
\begin{aligned}
&\int\limits_{\partial I_{i,j}^{in}} \varphi_{i,j}^m (\mathbf{u} \cdot \mathbf{n} z_1) \, \mathrm dl  \\                                                  
&=\int_{y_{j-\frac{1}{2}}}^{y_{j+\frac{1}{2}}} \left[ (u z_1) \varphi_{i,j}^m \right]_{x=x_{i+\frac{1}{2}}^{-}} \, \mathrm dy  
 - \int_{y_{j-\frac{1}{2}}}^{y_{j+\frac{1}{2}}} \left[ (u z_1) \varphi_{i,j}^m \right]_{x=x_{i-\frac{1}{2}}^{+}} \, \mathrm dy \\ 
& + \int_{x_{i-\frac{1}{2}}}^{x_{i+\frac{1}{2}}} \left[ (v z_1) \varphi_{i,j}^m \right]_{y=y_{j+\frac{1}{2}}^{-}} \, \mathrm dx 
 - \int_{x_{i-\frac{1}{2}}}^{x_{i+\frac{1}{2}}} \left[ (v z_1) \varphi_{i,j}^m \right]_{y=y_{j-\frac{1}{2}}^{+}} \,\mathrm dx.
\end{aligned}
\end{equation}
The integration of numerical flux $\widehat{(\mathbf{u} \cdot \mathbf{n}z_1)}$ in element
 boundary $\partial I_{i,j}$ is expanded as:
\begin{equation}
\begin{aligned}
&\int\limits_{\partial I_{i,j}} \varphi_{i,j}^m \widehat{(\mathbf{u} \cdot \mathbf{n} z_1)} \, \mathrm dl  \\                                                  
&=\int_{y_{j-\frac{1}{2}}}^{y_{j+\frac{1}{2}}} \left[ \widehat{u z_1} \, \varphi_{i,j}^m \right]_{x=x_{i+\frac{1}{2}}} \, \mathrm dy  
 - \int_{y_{j-\frac{1}{2}}}^{y_{j+\frac{1}{2}}} \left[ \widehat{u z_1} \, \varphi_{i,j}^m \right]_{x=x_{i-\frac{1}{2}}} \, \mathrm dy \\ 
& + \int_{x_{i-\frac{1}{2}}}^{x_{i+\frac{1}{2}}} \left[ \widehat{v z_1} \, \varphi_{i,j}^m \right]_{y=y_{j+\frac{1}{2}}} \, \mathrm dx 
 - \int_{x_{i-\frac{1}{2}}}^{x_{i+\frac{1}{2}}} \left[ \widehat{v z_1} \, \varphi_{i,j}^m \right]_{y=y_{j-\frac{1}{2}}} \, \mathrm dx,
\end{aligned}
\end{equation}
where the specific
expression of numerical flux about element $I_{i,j}$ is as follows
\begin{equation}
\begin{aligned}
&\widehat{u z_1} = \frac{1}{2} u^{in} \big( z_1^- + z_1^+ \big) - \frac{1}{2} S_1 \big( z_1^+ - z_1^- \big),  \\
&\widehat{v z_1} = \frac{1}{2} v^{in} \big( z_{1}^- + z_1^+ \big) - \frac{1}{2} S_2 \big( z_1^+ - z_1^- \big),
\end{aligned}
\end{equation}
where the superscript "in" denotes the interior of the current element.

The quasi-conservative spatial discretization preserves conservation of the mass of each fluid, total momentum and total
energy and satisfies the Abgrall condition \cite{cheng2020quasi}. Moreover, it can recover
 the first-order quasi-conservative finite volume scheme \cite{ALLAIRE2002577} when we use DG($P^0$) spatial discretization.
Although the positivity-preserving condition 
for internal energy was established 
in \cite{cheng2020quasi}, the strict 
positivity of internal energy 
does not guarantee a positive pressure under 
the Tammann EOS. Despite this theoretical limitation, 
we have not encountered any instabilities during numerical experiments.

\subsection{Implicit LDG method for the $\kappa$-source term}
\subsubsection{Implicit formulation}
Let us focus on solving the second part \eqref{kappa}. The partial densities, 
momentum, and total energy remain unchanged, and only volume fraction $z_1$ is updated. In this work, we discretize this equation implicitly 
using the first-order backward Euler method
\begin{equation}\label{imkappa}
        \frac{z_1^{n+1}-z_1^{n}}{\Delta t_{\mathrm{im}}} = \kappa^{n+1}\nabla \cdot \mathbf{u}^{n} =: f(z_1^{n+1}), 
\end{equation}
where $\Delta t_{\mathrm{im}} = {\Delta t}/{2}$, and
\begin{equation}
    \begin{aligned}
        \kappa^{n+1} &= z_1^{n+1} z_2^{n+1} \frac{\nu_1^{n+1}-\nu_2^{n+1}}{\nu^{n+1}}, \quad &
        \nu^{n+1} &= z_1^{n+1}\nu_1^{n+1} + z_2^{n+1}\nu_2^{n+1}, \\
        \nu_k^{n+1} &= \frac{1}{\gamma_k (p^{n+1}+p_{w,k})}, \quad &
        p^{n+1} &= (\rho e)^{n}(\gamma^{n+1}-1)-\gamma^{n+1} p_{w}^{n+1}.
    \end{aligned}
\end{equation}
Here, the mixture 
parameters $\gamma^{n+1}$ and $p_{w}^{n+1}$ are functions of 
$z_1^{n+1}$, and satisfy the 
equation \eqref{eq:mixture_parameters}. We can prove the bound-preserving property of the implicit scheme.

\begin{theorem}\label{thm:implicit_solver}
Given $z_1^n \in (0, 1)$ and $\Delta t_{\mathrm{im}} > 0$, 
the updated volume fraction $z_1^{n+1}$ defined by the implicit scheme \eqref{imkappa} admits a solution such that $z_1^{n+1} \in (0, 1)$. 
\end{theorem}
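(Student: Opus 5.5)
The plan is an intermediate value argument for a scalar root. Write $z = z_1^{n+1}$ and set $D := \nabla\cdot\mathbf u^{n}$, which is frozen during the implicit step. Define
\begin{equation*}
G(z) := z - z_1^{n} - \Delta t_{\mathrm{im}}\, D\, \kappa(z), \qquad \kappa(z) = z(1-z)\frac{\nu_1(z)-\nu_2(z)}{\nu(z)}.
\end{equation*}
Since $(\rho e)^n$ is held fixed, $p(z) = (\rho e)^n(\gamma(z)-1) - \gamma(z)p_w(z)$ is an explicit function of $z$ through \eqref{eq:mixture_parameters}. We seek a zero of $G$ in $(0,1)$.

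\textbf{Step 1 (regularity of $\kappa$).} Show that $\kappa$ is continuous on $[0,1]$. Using \eqref{eq:mixture_parameters}, $1/(\gamma-1)$ and $\gamma p_w/(\gamma-1)$ are affine in $z$, so
\begin{equation*}
p(z) = \frac{(\rho e)^n - \gamma p_w/(\gamma-1)}{1/(\gamma-1)}
\end{equation*}
is a ratio of affine functions with positive denominator, hence continuous. Under the standing admissibility assumption that the phase states stay physical, $p(z) + p_{w,k} > 0$. Then $\nu_k(z)$ is continuous and positive, so $\nu(z) \ge \min_k \nu_k > 0$, and the fraction $(\nu_1-\nu_2)/\nu$ is continuous and bounded on $[0,1]$. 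I expect this to be the main obstacle: one must ensure $p(z) + p_{w,k}$ stays positive for every $z \in [0,1]$, not only at $z_1^n$. I would first try to derive this from positivity of $\rho e + p_{w,k}$-type quantities, i.e. from $(\rho e)^n$ exceeding $z\gamma_1 p_{w,1}/(\gamma_1-1) + (1-z)\gamma_2 p_{w,2}/(\gamma_2-1)$ adjusted appropriately. Failing that, I would state it as a hypothesis: the solution remains in the admissible set used by the bound-preserving limiter. Alternatively, $p$ can be replaced by a clipped value without affecting the root argument.

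\textbf{Step 2 (endpoint signs).} Because of the factor $z(1-z)$, $\kappa(0) = \kappa(1) = 0$. Hence
\begin{equation*}
G(0) = -z_1^n < 0, \qquad G(1) = 1 - z_1^n > 0,
\end{equation*}
and this holds for every $\Delta t_{\mathrm{im}} > 0$ and every sign of $D$.

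\textbf{Step 3 (conclusion).} By continuity and the intermediate value theorem, there is $z^* \in (0,1)$ with $G(z^*) = 0$. Setting $z_1^{n+1} = z^*$ gives a solution of \eqref{imkappa} in $(0,1)$, independent of the time-step size. This is exactly the unconditional bound-preserving claim; the theorem asserts existence only, not uniqueness.

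As a remark for the numerical solver, a bracketing method on $[0,1]$ (bisection, or Newton safeguarded by bisection) then always finds such a root. I will not need uniqueness. If it were wanted, it would require monotonicity of $G$, which holds for small $\Delta t_{\mathrm{im}}|D|$.
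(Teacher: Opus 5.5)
Your argument is the paper's proof: the same auxiliary function (your $G$ is $\Delta t_{\mathrm{im}}$ times their $F$), the observation $\kappa(0)=\kappa(1)=0$ giving $G(0)<0<G(1)$ for every $\Delta t_{\mathrm{im}}>0$ and either sign of $\nabla\cdot\mathbf u^n$, and then the intermediate value theorem. Your Step 1 checks continuity of $F$ on $[0,1]$, which the paper simply asserts, and you should take your hypothesis route rather than try to derive it: with $p_{w,1}=0$, requiring $p(z)>0$ for all $z\in[0,1]$ is equivalent to $(\rho e)^n>\gamma_2 p_{w,2}/(\gamma_2-1)$, which fails in the air cells of the gas--water tests, and there $\kappa(z)$ can genuinely have a pole inside $(0,1)$.
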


\begin{proof}
First, we define the auxiliary function $F(z_1)$ as
\begin{equation}
    F(z_1) = \frac{z_1 - z_1^n}{\Delta t_{\mathrm{im}}} - f(z_1).
\end{equation}
Then, solving \eqref{imkappa} is equivalent to finding the root of $F(z_1) = 0$.  Considering the source term $f(z_1)$, we observe that 
\begin{equation}
    f(0) = 0, \quad f(1) = 0.
\end{equation}
Evaluating $F(z_1)$ at the interval endpoints yields
\begin{align}
    F(0) = \frac{ - z_1^n}{\Delta t_{\mathrm{im}}} - f(0) = \frac{ - z_1^n}{\Delta t_{\mathrm{im}}}, \quad
    F(1) = \frac{1 - z_1^n}{\Delta t_{\mathrm{im}}} - f(1) = \frac{1 - z_1^n}{\Delta t_{\mathrm{im}}}.
\end{align}
Since $z_1^n \in (0, 1)$, it follows that
\begin{equation}
    F(0) < 0, \quad  F(1) > 0.
\end{equation}
The continuity of $F(z_1)$ guarantees the 
existence of root $z_1^{n+1} \in (0, 1)$. 

In practice, the root of $F(z_1)$ can be efficiently located via the bisection method at every quadrature point. 
Then, the approximate solution $z_1^{n+1}$ can be obtained via an $L^2$ projection. This completes the proof.
\end{proof}

When the second-order Strang splitting is employed, 
the temporal splitting error is $O(\Delta t^2)$. 
To ensure that the final numerical solution genuinely 
achieves an overall second-order (or higher) accuracy, 
the temporal discretization schemes for the subsystems \eqref{kappa} 
must also be at least second-order accurate \cite{leveque2002finite,toro2013riemann}. 
Otherwise, the global error may be dominated 
by the lower-order subsystem errors.
To achieve higher temporal accuracy while maintaining the bound-preserving property, 
we propose an adaptive implicit strategy which
 hybridizes the backward Euler and SDIRK2 method \cite{alexander1977diagonally} as follows
\begin{equation}\label{adaptive_sdirk2}
\begin{aligned}
    z_1^{*} &= z_1^{n} + J \Delta t_{\mathrm{im}} f(z_1^{*}), \\ \tilde{z}_{\mathrm{pred}} &= \left( 2 - \frac{1}{J} \right) z_1^{n} + \left( \frac{1}{J} - 1 \right) z_1^{*}, \\
    z_1^{n+1} &=
    \begin{cases}
        \tilde{z}_{\mathrm{pred}} + J \Delta t_{\mathrm{im}} f(z_1^{n+1}), & \text{if } \tilde{z}_{\mathrm{pred}} \in (0, 1),\\[2ex]
        z_1^{*} + (1 - J) \Delta t_{\mathrm{im}} f(z_1^{n+1}), & \text{otherwise},
    \end{cases}
\end{aligned}
\end{equation}
where $J= 1-\frac{\sqrt{2}}{2}$ is a constant. Specifically, 
the scheme initiates 
with a backward Euler scheme over a time step $J \Delta t_{\mathrm{im}}$, 
which strictly coincides with the first stage of the standard SDIRK2 method. 
If the explicitly extrapolated predictor $\tilde{z}_{\mathrm{pred}}$ 
falls within the interval $(0, 1)$, 
the algorithm proceeds with the 
second stage of SDIRK2 to maintain second-order accuracy; 
otherwise, it locally degenerates 
into a robust backward Euler scheme over the 
remaining $(1 - J) \Delta t_{\mathrm{im}}$ interval to strictly enforce the bound-preserving property. 
For both stages in equation \eqref{adaptive_sdirk2}, we employ the bisection method.

\begin{theorem} \label{im_abgrall}
The implicit solver \eqref{imkappa} satisfies the Abgrall condition, which dictates that 
uniform pressure and velocity fields must be 
preserved across an isolated two-phase material interface, namely
\begin{equation}
\text{if } \mathbf u^n(x,y) \equiv \mathbf u_0, \quad p^n(x,y) \equiv p_0, \quad \text{then } \mathbf u^{n+1}(x,y) \equiv \mathbf u_0, \quad p^{n+1}(x,y) \equiv p_0.
\label{eq:abgrall}
\end{equation}
\end{theorem}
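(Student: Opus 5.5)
Since the source step \eqref{imkappa} updates only $z_1$, the conserved quantities $z_1\rho_1$, $z_2\rho_2$, $\rho\mathbf u$ and $E$ are carried over unchanged. The plan is to use this to show that velocity is trivially preserved, that the source vanishes, and that pressure is therefore also preserved.

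\textbf{Step 1: velocity.} Because $z_1\rho_1$ and $z_2\rho_2$ are unchanged, so is $\rho = z_1\rho_1+z_2\rho_2$. Together with the unchanged momentum $\rho\mathbf u$, this gives $\mathbf u^{n+1} = (\rho\mathbf u)^n/\rho^n = \mathbf u^n \equiv \mathbf u_0$ pointwise, and hence also after the $L^2$ projection.

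\textbf{Step 2: the source vanishes.} The key observation is that $\mathbf u^n\equiv\mathbf u_0$ is constant, so $\nabla\cdot\mathbf u^n = 0$ at every quadrature point. The map $f(z_1)=\kappa(z_1)\nabla\cdot\mathbf u^n$ is then identically zero, and $F(z_1)=(z_1-z_1^n)/\Delta t_{\mathrm{im}}$. Its unique root is $z_1^{n+1}=z_1^n$, and the bisection of Theorem \ref{thm:implicit_solver} recovers exactly this root. The same argument applies to both stages of the adaptive scheme \eqref{adaptive_sdirk2}:
\begin{itemize}
\item the first stage gives $z_1^*=z_1^n$;
\item the predictor is then $\tilde z_{\mathrm{pred}}=(2-1/J)z_1^n+(1/J-1)z_1^n=z_1^n\in(0,1)$;
\item either branch of the second stage returns $z_1^{n+1}=z_1^n$.
\end{itemize}
The $L^2$ projection of pointwise values equal to $z_1^n$ reproduces $z_1^n$ (up to the projection of a polynomial already in $V_h^K$, which is exact).

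\textbf{Step 3: pressure.} With $z_1^{n+1}=z_1^n$, the mixture parameters $\gamma^{n+1}$ and $p_w^{n+1}$ defined by \eqref{eq:mixture_parameters} coincide with $\gamma^n$ and $p_w^n$. The internal energy $(\rho e)^n = E-\tfrac12\rho|\mathbf u|^2$ is also unchanged, so
\[
p^{n+1}=(\rho e)^n(\gamma^n-1)-\gamma^n p_w^n=p^n\equiv p_0 .
\]

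\textbf{Expected obstacle.} The only delicate point is whether $\nabla\cdot\mathbf u^n$ is computed exactly zero by the LDG discretization of the divergence. I would argue as follows. The LDG gradient of a constant field uses a single-valued flux, so the volume and interface terms cancel exactly for a constant $\mathbf u_0$. The discrete velocity is $\mathbf u_h=(\rho\mathbf u)_h/\rho_h$ evaluated at quadrature points, and it equals $\mathbf u_0$ there when $(\rho\mathbf u)_h=\mathbf u_0\rho_h$. The remaining step is to verify that the LDG divergence of a field that is exactly constant at all nodes and across interfaces vanishes identically.
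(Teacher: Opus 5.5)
Your proof is correct and follows the same route as the paper: constant velocity makes the divergence vanish, hence $f$ vanishes and $z_1^{n+1}=z_1^n$; since $z_1\rho_1$, $z_2\rho_2$, $\rho\mathbf u$ and $E$ are frozen in this substep, the whole state is unchanged, and with it $\mathbf u$ and $p$. Your additional checks go beyond what the paper writes and are sound: the adaptive variant \eqref{adaptive_sdirk2} also returns $z_1^n$, and \eqref{eq:reconstruction_Dh} gives $D_h\equiv 0$ because with $\mathbf u^\pm=\mathbf u_0$ the upwind trace is $\mathbf u_0$ and the boundary and volume terms cancel by the divergence theorem. One caveat: bisection only locates the root $z_1^n$ up to tolerance, so, as in the paper, the statement concerns the exact solution of \eqref{imkappa}.
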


\begin{proof}
  When $\mathbf{u}^n(x,y) \equiv \mathbf{u}_0$, 
  the velocity divergence vanishes ($\nabla \cdot \mathbf{u}^n = 0$). 
  Substituting this into the implicit update \eqref{imkappa} 
  yields $z_1^{n+1} = z_1^n$. Since the partial densities, 
  momentum, and total energy are explicitly held constant 
  during this substep, the entire state vector remains 
  unchanged ($\mathbf{U}^{n+1} = \mathbf{U}^n$). 
  Consequently, both the velocity and pressure fields are preserved. This completes the proof.
\end{proof}

\subsubsection{Reconstruction for $\nabla \cdot \mathbf{u}$}
When employing the DG($P^K$) method for 
the state vector $\mathbf{U}$, the expected spatial 
accuracy for the velocity field $\mathbf{u}$ is 
$\mathcal{O}(\Delta x^{K+1})$. However, 
direct local differentiation to compute $\nabla \cdot \mathbf{u}$ 
results in a sub-optimal accuracy of $\mathcal{O}(\Delta x^K)$. 
To recover the optimal $(K+1)$-th order accuracy for the divergence 
term, inspired by the LDG method \cite{cockburn1998local,cockburn1989tvb}, we employ the 
following reconstruction procedure to 
obtain an approximation $D_h \approx \nabla \cdot \mathbf{u}$. 

Let $D_h \in V_h^K$. 
Instead of directly differentiating the polynomials 
inside each element, we seek a weak formulation for $D_h$. 
By multiplying the divergence with 
the test function $\varphi_{i,j}^m$ and performing integration 
by parts over the element $I_{i,j}$, 
we obtain:
\begin{equation}\label{eq:reconstruction_Dh}
    \int_{I_{i,j}} D_h \varphi_{i,j}^m   \mathrm dS =\int_{\partial I_{i,j}} 
    \widehat{(\mathbf{u} \cdot \mathbf{n})} \varphi_{i,j}^m  
    \, \mathrm dl  - \int_{I_{i,j}} \mathbf{u} \cdot \nabla \varphi_{i,j}^m  \, \mathrm dS.
\end{equation}
To physically respect the advection direction and maintain numerical stability, 
we specify a purely upwind numerical flux. Specifically, 
the upwind traces are determined by the local flow directions 
at the respective interfaces:
\begin{equation}
    \widehat{u} = u^{\text{up}} = 
    \begin{cases} 
        u^-, & \text{if } \bar{u} \ge 0, \\
        u^+, & \text{if } \bar{u} < 0 ,
    \end{cases}
    \quad \quad 
    \widehat{v} = v^{\text{up}} = 
    \begin{cases} 
        v^-, & \text{if } \bar{v} \ge 0, \\
        v^+, & \text{if } \bar{v} < 0, 
    \end{cases}
\end{equation}
where $\bar{u}$ and $\bar{v}$ represent the 
characteristic advection speeds at the interface. 
In this work, we employ a simple arithmetic mean (e.g., $\bar{u}=(u^- + u^+)/2$). 
Once the reconstructed 
divergence $D_h^n$ is obtained 
at time level $n$, we replace 
the $\nabla \cdot \mathbf{u}^n$ 
with $D_h^n$ in the source 
term $f(z_1)$ within the 
implicit scheme \eqref{imkappa}, thereby restoring the optimal accuracy.

\section{Nonlinear limiters and computational procedure}\label{sec-limiters}
\subsection{Oscillation-eliminating procedure}
The semi-discrete DG method \eqref{eq:Allaire-w} and \eqref{eq:Allaire-z} can be rewritten in 
an ODE form, which can be discretized 
by the SSP-RK scheme \cite{shu1988total,gottlieb2001strong,zhu2013runge}. To avoid the potential numerical oscillations, we use an oscillation-eliminating procedure $\mathcal F_{{\Delta t}} \mathbf U_h = \mathbf U_h^{\sigma}$  after 
each SSP-RK stage, where $F_{{\Delta t}}$ 
represents the oscillation-eliminating operator defined as \cite{peng2025oedg}
\begin{equation}
\mathcal{F}_{\Delta t}\mathbf{U}_h = \mathbf U_h^{\sigma} = \mathbf{U}_{i,j}^{(0)}\varphi_{i,j}^{(0)}(x,y) + \sum_{k=1}^K e^{-\Delta t\sum_{m=0}^k \delta_{i,j}^m(\mathbf{U}_h)} \sum_{|\boldsymbol{\alpha}|=k} \mathbf{U}_{i,j}^{(\boldsymbol{\alpha})}\varphi_{i,j}^{(\boldsymbol{\alpha})}(x,y).
\label{oe-operator}
\end{equation}
Here, $\Delta t$ is the time step size, $\boldsymbol{\alpha}=(\alpha_1, \alpha_2)$ is the multi-index with $|\boldsymbol{\alpha}| = \alpha_1 + \alpha_2$. The coefficient $\delta_{i,j}^m(\mathbf{U}_h)$ is defined as 
\begin{equation}
\delta_{i,j}^m(\mathbf{U}_h) = \max_{1 \le q \le 6} \left( \frac{\beta_{i,j}^x \big( \sigma_{i+\frac{1}{2},j}^m(\mathbf{U}_h^{(q)}) + \sigma_{i-\frac{1}{2},j}^m(\mathbf{U}_h^{(q)}) \big)}{\Delta x} 
 + \frac{\beta_{i,j}^y \big( \sigma_{i,j+\frac{1}{2}}^m(\mathbf{U}_h^{(q)}) + \sigma_{i,j-\frac{1}{2}}^m(\mathbf{U}_h^{(q)}) \big)}{\Delta y} \right),
\end{equation}
with
\begin{equation}
\sigma_{i+\frac{1}{2},j}^m(\mathbf{U}_h^{(q)})
= \begin{cases}
0, & \text{if } \mathbf{U}_h^{(q)} \equiv \overline{\mathbf{U}}_\Omega^{(q)}, \\[2ex]
\dfrac{(2m+1)\Delta x^m}{2(2K-1)m!} \displaystyle\sum_{|\boldsymbol{\alpha}|=m} \frac{ \frac{1}{\Delta y} \int_{y_{j-\frac{1}{2}}}^{y_{j+\frac{1}{2}}} \big| [\![ \partial^{\boldsymbol{\alpha}} \mathbf{U}_h^{(q)} ]\!]_{i+\frac{1}{2},j} \big| \mathrm{d}y }{ \big\| \mathbf{U}_h^{(q)} - \overline{\mathbf{U}}_\Omega^{(q)} \big\|_{L^\infty(\Omega)} }, & \text{otherwise},
\end{cases}
\label{sigmayou}
\end{equation}
and
\begin{equation}
\sigma_{i,j+\frac{1}{2}}^m(\mathbf{U}_h^{(q)})
= \begin{cases}
0, & \text{if } \mathbf{U}_h^{(q)} \equiv \overline{\mathbf{U}}_\Omega^{(q)}, \\[2ex]
\dfrac{(2m+1)\Delta y^m}{2(2K-1)m!} \displaystyle\sum_{|\boldsymbol{\alpha}|=m} \frac{ \frac{1}{\Delta x} \int_{x_{i-\frac{1}{2}}}^{x_{i+\frac{1}{2}}} \big| [\![ \partial^{\boldsymbol{\alpha}} \mathbf{U}_h^{(q)} ]\!]_{i,j+\frac{1}{2}} \big| \mathrm{d}x }{ \big\| \mathbf{U}_h^{(q)} - \overline{\mathbf{U}}_\Omega^{(q)} \big\|_{L^\infty(\Omega)} }, & \text{otherwise},
\end{cases}
\label{sigmashang}
\end{equation}
where $\beta_{i,j}^x$ and $\beta_{i,j}^y$ are the estimates of the local 
maximum wave speeds in the $x$- and $y$-directions, respectively, 
$\overline{\mathbf{U}}_\Omega^{(q)} = \frac{1}{|\Omega|} \int_{\Omega} \mathbf{U}_h^{(q)}(x,y)\mathrm{d}x\mathrm{d}y$ 
represents the global average of the $q$th variable $\mathbf{U}_h^{(q)}$ over the entire computational domain $\Omega$.

Next, we consider the Abgrall condition of the oscillation-eliminating procedure. As a preliminary point, we first introduce the following two lemmas.

\begin{lemma}\label{line-invar}
If the numerical solution satisfies a linear identity $\mathbf{\Lambda}\mathbf{U}_h(x,y) \equiv \mathbf{c}$ for a given $\mathbf{\Lambda} \in \mathbb{R}^{q \times 6}$, $\mathbf{c} \in \mathbb{R}^{q \times 1}$, and $q \in \mathbb{Z}^+$, then it follows that \cite{yan2024uniformly}
\begin{equation}\label{eq:inv}
\mathbf{\Lambda}\mathcal{F}_{\Delta t}(\mathbf{U}_h(x,y)) \equiv \mathbf{c}, 
\end{equation}
which implies the oscillation-eliminating operator is linearity-invariant.
\end{lemma}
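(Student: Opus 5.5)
The plan is to show that the oscillation-eliminating operator \eqref{oe-operator} acts on every component of $\mathbf U_h$ by \emph{the same} scalar damping factors. Then any linear combination of components is damped in the same way, and a constant combination is left untouched.

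First I would observe that the damping coefficients $\delta_{i,j}^m(\mathbf U_h)$ are defined through a maximum over all $q=1,\dots,6$. They are therefore scalars depending on the element $I_{i,j}$ and the degree $m$, but not on the component $q$. Write
\begin{equation*}
\theta_{i,j}^k := e^{-\Delta t\sum_{m=0}^k \delta_{i,j}^m(\mathbf U_h)} \in (0,1].
\end{equation*}
On $I_{i,j}$ we then have $\mathcal F_{\Delta t}\mathbf U_h = \mathbf U_{i,j}^{(0)}\varphi_{i,j}^{(0)} + \sum_{k=1}^K \theta_{i,j}^k \sum_{|\boldsymbol\alpha|=k}\mathbf U_{i,j}^{(\boldsymbol\alpha)}\varphi_{i,j}^{(\boldsymbol\alpha)}$, with the same $\theta_{i,j}^k$ for all components. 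Since $\mathbf\Lambda$ is a constant matrix, it commutes with this modal scaling:
\begin{equation*}
\mathbf\Lambda\mathcal F_{\Delta t}\mathbf U_h = (\mathbf\Lambda\mathbf U_{i,j}^{(0)})\varphi_{i,j}^{(0)} + \sum_{k=1}^K\theta_{i,j}^k\sum_{|\boldsymbol\alpha|=k}(\mathbf\Lambda\mathbf U_{i,j}^{(\boldsymbol\alpha)})\varphi_{i,j}^{(\boldsymbol\alpha)}.
\end{equation*}

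Second, I would use the hypothesis $\mathbf\Lambda\mathbf U_h\equiv\mathbf c$. Expanding in the orthogonal Legendre basis on each $I_{i,j}$ and using uniqueness of the modal coefficients (orthogonality), we get $\mathbf\Lambda\mathbf U_{i,j}^{(\boldsymbol\alpha)}=\mathbf 0$ for $|\boldsymbol\alpha|\ge1$. We also get $(\mathbf\Lambda\mathbf U_{i,j}^{(0)})\varphi_{i,j}^{(0)}\equiv\mathbf c$, since $\varphi^{(0)}$ is the constant basis function with cell average equal to the coefficient. Substituting these into the display above gives $\mathbf\Lambda\mathcal F_{\Delta t}\mathbf U_h\equiv\mathbf c$ on every element, which proves \eqref{eq:inv}.

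The main point needing care is the first step: the damping factors must not depend on $q$. This is guaranteed by the $\max_{1\le q\le6}$ in the definition of $\delta_{i,j}^m$, and I would state that explicitly. The one subtlety is the degenerate case $\mathbf U_h^{(q)}\equiv\overline{\mathbf U}_\Omega^{(q)}$, where $\sigma$ is set to $0$. This only affects the value of the common scalar and does not break component-independence, so no extra argument is needed beyond noting that the $\delta$'s are finite and nonnegative, which makes $\theta_{i,j}^k$ well defined.
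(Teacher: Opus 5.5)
Your proof is correct and is essentially the standard argument behind this lemma; the paper itself gives no proof and simply cites \cite{yan2024uniformly}. Because each $\delta_{i,j}^m$ is a maximum over all six components, the damping factors are common scalars, so $\mathbf\Lambda$ commutes with $\mathcal F_{\Delta t}$, and the constant $\mathbf\Lambda\mathbf U_h\equiv\mathbf c$ has only a zeroth mode, which $\mathcal F_{\Delta t}$ leaves untouched; your handling of the degenerate case $\sigma=0$ and of finiteness of the $\delta$'s closes the remaining loose ends.
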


\begin{lemma}\label{2d-quasi-abgrall}
The semi-discrete DG method \eqref{eq:Allaire-w} and \eqref{eq:Allaire-z} coupled with the forward Euler time discretization maintains the Abgrall condition \eqref{eq:abgrall} around an isolated material interface, that is \cite{cheng2020quasi}
\begin{equation*}
    \text{if } \mathbf u_\sigma^n(x,y) \equiv \mathbf u_0, \quad p_\sigma^n(x,y) \equiv p_0, \quad \text{then } \mathbf u^{n+1}(x,y) \equiv \mathbf u_0, \quad p^{n+1}(x,y) \equiv p_0.
\end{equation*}
\end{lemma}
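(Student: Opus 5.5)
The statement to prove is Lemma~\ref{2d-quasi-abgrall}: the quasi-conservative DG scheme \eqref{eq:Allaire-w}--\eqref{eq:Allaire-z}, advanced by one forward Euler step from a state with uniform velocity and pressure, returns uniform velocity and pressure.

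I would reduce the proof to checking that the forward Euler update of the DG discretization \eqref{eq:Allaire-w}--\eqref{eq:Allaire-z} maps an interface state with $\mathbf u_h\equiv\mathbf u_0$, $p_h\equiv p_0$ to another such state. The idea is to show that the updated conservative variables are exactly those obtained by transporting $z_1$, $z_1\rho_1$ and $z_2\rho_2$ with the same discrete operator. The uniform velocity and pressure then fall out of linear relations among the components.

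\textbf{Step 1: density-type variables.} The uniform-velocity, uniform-pressure state satisfies, pointwise in $(x,y)$,
\begin{equation*}
\rho\mathbf u=\mathbf u_0(z_1\rho_1+z_2\rho_2),\qquad E=\tfrac12|\mathbf u_0|^2\rho+\frac{p_0+\gamma p_w}{\gamma-1}.
\end{equation*}
By \eqref{eq:mixture_parameters}, both $1/(\gamma-1)$ and $\gamma p_w/(\gamma-1)$ are affine in $z_1$. Hence $E$ is an affine combination of $z_1\rho_1$, $z_2\rho_2$ and $z_1$, with constant coefficients. I would assume the numerical solution is represented in $V_h^K$ so that these identities hold as polynomials.

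Substituting $\mathbf u\equiv\mathbf u_0$ and $p\equiv p_0$ into the fluxes, $\mathbf F(\mathbf W,z_1)$ becomes linear in $(z_1\rho_1,z_2\rho_2,z_1)$ plus a constant pressure part $p_0\mathbf I$. That constant part cancels between the volume term and the boundary term, by the divergence theorem applied to $\varphi^m_{i,j}$. The Lax--Friedrichs dissipation $S_d(\mathbf W^+-\mathbf W^-)$ is also linear in the jumps. The delicate point is that $S_d$ must be the same scalar for every component, which holds because a single $S_d$ is used per face.

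\textbf{Step 2: the volume fraction equation.} Since $u^{in}=u_0$, the discretization \eqref{eq:Allaire-z} coincides with the conservative upwind-type DG discretization of $z_1$ transported by the constant velocity $\mathbf u_0$. The volume term $-\int\varphi\,\mathbf u_0\cdot\nabla z_1$ plus the inner-boundary term equals $\int z_1\,\mathbf u_0\cdot\nabla\varphi$ after integration by parts. The flux $\widehat{uz_1}$ then reduces to $\frac12 u_0(z_1^-+z_1^+)-\frac12S_1(z_1^+-z_1^-)$. This has exactly the same form as the mass-flux components, with the same $S_d$.

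\textbf{Step 3: conclude.} Let $\mathcal L$ denote the common linear DG operator for transport at speed $\mathbf u_0$ with dissipation $S_d$. By Steps 1--2,
\begin{equation*}
(z_k\rho_k)^{n+1}=z_k\rho_k+\Delta t\,\mathcal L(z_k\rho_k),\qquad z_1^{n+1}=z_1+\Delta t\,\mathcal L(z_1).
\end{equation*}
The momentum update equals $\mathbf u_0\rho^{n+1}$, so the new velocity is $\mathbf u_0$. The energy update equals $\tfrac12|\mathbf u_0|^2\rho^{n+1}$ plus $\frac{p_0}{\gamma-1}+\frac{\gamma p_w}{\gamma-1}$ evaluated at $z_1^{n+1}$, because $\mathcal L$ annihilates constants and all maps involved are affine. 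Reading off the EOS then gives $p^{n+1}\equiv p_0$.

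\textbf{Main obstacle.} I expect the hardest step to be the energy equation. One must verify that the coefficient $\frac{p_0+\gamma p_w}{\gamma-1}$ is genuinely affine in $z_1$ with constant coefficients. One must also confirm that the numerical flux and the inner-boundary term for $z_1$ reproduce exactly the same operator $\mathcal L$ applied to $z_1$ as the conservative part applies to $E$. I would check this face by face, matching the terms $\frac12u_0(\cdot^-+\cdot^+)$ and $\frac12S_d(\cdot^+-\cdot^-)$.
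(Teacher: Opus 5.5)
Your argument is correct, and it is essentially the standard proof from \cite{cheng2020quasi}; the paper cites that reference rather than reproducing a proof. With $\mathbf u\equiv\mathbf u_0$ and $p\equiv p_0$, every component of the Lax--Friedrichs DG update reduces, after the constant flux parts cancel, to a single linear transport operator with the shared $S_d$, and the quasi-conservative $z_1$ discretization (with $u^{in}=u_0$ and its inner-boundary term cancelling the trace term from integrating $-\int\varphi\,\mathbf u_0\cdot\nabla z_1$ by parts) reproduces exactly that operator, so $\rho\mathbf u$ and $E-\frac12|\mathbf u_0|^2\rho$ stay tied to $\rho$ and to the affine-in-$z_1$ expression from \eqref{eq:mixture_parameters}; the one implicit assumption you should state is that the integrals are evaluated exactly (or by quadrature satisfying the discrete divergence theorem), since both the constant-flux cancellation and the integration-by-parts identity in Step~2 rely on it.
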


Finally, we establish the Abgrall condition of OEDG methood by Theorem \ref{OEDG_Abgrall}.
\begin{theorem} \label{OEDG_Abgrall}
The fully-discrete OEDG method with an SSP-RK scheme maintains the Abgrall condition around an isolated material interface, that is
\begin{equation*}
    \text{if } \mathbf{u}_\sigma^n(x,y) \equiv \mathbf{u}_0, \quad p_\sigma^n(x,y) \equiv p_0, \quad \text{then } \mathbf{u}_\sigma^{n+1}(x,y) \equiv \mathbf{u}_0, \quad p_\sigma^{n+1}(x,y) \equiv p_0,
\end{equation*}
where $\mathbf{u}_\sigma^{n+1} := \mathbf{u}(\mathcal{F}_{\Delta t}\mathbf{U}_h^{n+1}(x,y))$ and $p_\sigma^{n+1} := p(\mathcal{F}_{\Delta t}\mathbf{U}_h^{n+1}(x,y))$.
\end{theorem}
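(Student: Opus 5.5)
The plan is an induction over the stages of the SSP-RK scheme. The key observation is that each stage is a convex combination of forward Euler steps, each followed by the oscillation-eliminating operator $\mathcal F_{\Delta t}$.

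First I would recast the hypothesis $\mathbf u_\sigma \equiv \mathbf u_0$, $p_\sigma \equiv p_0$ as a family of linear identities on the conservative-type vector $\mathbf U_h$. Velocity equilibrium means $\rho\mathbf u - \mathbf u_0(z_1\rho_1+z_2\rho_2) \equiv \mathbf 0$, which is linear in $\mathbf U$. Under the Tammann EOS, pressure equilibrium reads
\begin{equation*}
E-\tfrac12\mathbf u_0\cdot(\rho\mathbf u)=\rho e=\frac{p_0}{\gamma-1}+\frac{\gamma p_w}{\gamma-1}
=z_1\Big(\frac{p_0+\gamma_1p_{w,1}}{\gamma_1-1}\Big)+(1-z_1)\Big(\frac{p_0+\gamma_2p_{w,2}}{\gamma_2-1}\Big).
\end{equation*}
Here we use the mixture relations \eqref{eq:mixture_parameters}, which are linear in $z_1$. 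This is a linear identity $\mathbf\Lambda\mathbf U\equiv\mathbf c$ in $(\rho\mathbf u,E,z_1,z_1\rho_1,z_2\rho_2)$, with $\mathbf\Lambda,\mathbf c$ depending only on $\mathbf u_0,p_0$ and the EOS constants. Conversely, whenever $\mathbf\Lambda\mathbf U_h\equiv\mathbf c$ holds and $\rho>0$, we recover $\mathbf u\equiv\mathbf u_0$ and $p\equiv p_0$. So the equilibrium set is exactly the affine set $\mathcal A=\{\mathbf U:\mathbf\Lambda\mathbf U\equiv\mathbf c\}$.

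Next I would treat a single stage. Suppose the input $\mathcal F_{\Delta t}\mathbf U_h^{(s)}$ lies in $\mathcal A$. Lemma \ref{2d-quasi-abgrall} says the forward Euler step of the quasi-conservative DG scheme \eqref{eq:Allaire-w}--\eqref{eq:Allaire-z}, started from data with uniform $\mathbf u_0,p_0$, produces output with $\mathbf u\equiv\mathbf u_0$, $p\equiv p_0$; that is, the output also lies in $\mathcal A$. The SSP-RK stage is a convex combination $\sum_l \alpha_{sl}\,(\cdot)$ of such forward Euler outputs and of earlier (already filtered) stages. Since $\mathcal A$ is affine and the weights sum to one, the combination remains in $\mathcal A$. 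Lemma \ref{line-invar} then shows that applying $\mathcal F_{\Delta t}$ to the combination keeps it in $\mathcal A$. Induction on the stage index gives $\mathcal F_{\Delta t}\mathbf U_h^{n+1}\in\mathcal A$. By the converse above (positivity of $\rho$ is preserved, since the OE operator keeps cell averages and the BP setting is assumed), this yields $\mathbf u_\sigma^{n+1}\equiv\mathbf u_0$ and $p_\sigma^{n+1}\equiv p_0$.

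The main obstacle is making the linear reformulation airtight. The identity uses $\mathbf u_0\cdot\rho\mathbf u$ in place of $\rho|\mathbf u|^2$, which is valid only once velocity equilibrium is already known. Pressure equilibrium is also linear only because the Tammann mixture rule \eqref{eq:mixture_parameters} is affine in $z_1$. I would verify carefully that $\mathbf\Lambda\mathbf U\equiv\mathbf c$ is equivalent to (not merely implied by) the equilibrium conditions, given $\rho\neq 0$. That equivalence is what lets Lemma \ref{line-invar} be applied at every stage.
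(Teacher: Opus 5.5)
Your proposal is correct and follows essentially the paper's route. You encode velocity and pressure equilibrium as linear identities in $\mathbf U$ (via the affine-in-$z_1$ Tammann mixture rule), invoke Lemma \ref{2d-quasi-abgrall} together with the convex-combination structure of SSP-RK, and pass through $\mathcal F_{\Delta t}$ by the linearity invariance of Lemma \ref{line-invar}; your explicit stage-by-stage induction over the affine set is, if anything, more careful than the paper's two-step argument. One small correction: preservation of cell averages by $\mathcal F_{\Delta t}$ does not by itself give pointwise $\rho_\sigma>0$, since the modes are damped by degree-dependent factors. The nonvanishing of $\rho_\sigma$ needed to recover $\mathbf u\equiv\mathbf u_0$ must simply be assumed, as the paper tacitly does when it divides by $\rho_\sigma^n$.
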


\begin{proof}
Since an SSP RK method can be considered as a convex combination of
the forward Euler method, the RK step in our fully-discrete OEDG scheme preserves the Abgrall condition around an isolated material interface according to
Lemma \ref{2d-quasi-abgrall}. Hence, we only need to prove that the OE procedure maintains the 
Abgrall condition, that is
\begin{equation*}
    \text{if } \mathbf{u}^n(x,y) \equiv \mathbf{u}_0, \quad p^n(x,y) \equiv p_0,
     \quad \text{then } \mathbf{u}_\sigma^{n}(x,y) \equiv \mathbf{u}_0, \quad p_\sigma^{n}(x,y) \equiv p_0.
\end{equation*}

First, based on the scale-invariant property \eqref{eq:inv}, the condition  $\mathbf{u}^n(x,y) \equiv \mathbf{u}_0$ gives
\begin{equation*}
    (\rho \mathbf{u})_\sigma^n(x,y) = \mathcal{F}_{\Delta t}(\rho \mathbf{u})^n(x,y) = \mathbf{u}_0 \mathcal{F}_{\Delta t}\rho^n(x,y) = \mathbf{u}_0 \rho_\sigma^n(x,y),
\end{equation*}
which implies
\begin{equation} \label{eq:vel_const}
    \mathbf{u}_\sigma^n(x,y) = (\rho \mathbf{u})_\sigma^n(x,y) / \rho_\sigma^n(x,y) \equiv \mathbf{u}_0.
\end{equation}
Thus, uniform velocity is preserved by the OEDG method.

Next, under the conditions $\mathbf{u}^n(x,y) \equiv \mathbf{u}_0$ and $p^n(x,y) \equiv p_0$, it holds
\begin{equation*}
    E^n(x,y) - \frac{1}{2} |\mathbf{u}_0|^2 \rho^n(x,y) = p_0 \sum_{\ell=1}^2 \frac{z_\ell^n}{\gamma_\ell - 1}+\sum_{\ell=1}^2 \frac{z_\ell^n \gamma_\ell p_{w,\ell}}{\gamma_\ell - 1},
\end{equation*}
which can be expressed as $\boldsymbol{\Lambda} \mathbf{U}^n(x) = \mathbf c = [0,0,0,0,0,c]^\mathrm{T}$ with
\begin{equation*}
    \boldsymbol{\Lambda} = \left( -\frac{1}{2} |\mathbf{u}_0|^2, -\frac{1}{2} |\mathbf{u}_0|^2, 0, 0, 1, 
    \frac{p_0+\gamma_2 p_{w,2}}{\gamma_2 - 1} - \frac{p_0+\gamma_1 p_{w,1}}{\gamma_1 - 1}  \right), \quad c = \frac{p_0+\gamma_2 p_{w,2}}{\gamma_2 - 1}.
\end{equation*}
Again, thanks to the linearity-invariant property \eqref{eq:inv} of the OE procedure, we obtain $\boldsymbol{\Lambda} \mathcal{F}_{\Delta t}(\mathbf{U}^n(x)) = \mathbf c$, which implies
\begin{equation*}
    \mathcal{F}_{\Delta t}E^n(x,y) - \frac{1}{2} |\mathbf{u}_0|^2 \mathcal{F}_{\Delta t}\rho^n(x,y) = p_0 \sum_{\ell=1}^2 \frac{\mathcal{F}_{\Delta t}z_\ell^n}{\gamma_\ell - 1}+\sum_{\ell=1}^2 \frac{\mathcal{F}_{\Delta t}z_\ell^n \gamma_\ell p_{w,\ell}}{\gamma_\ell - 1}.
\end{equation*}
Using \eqref{eq:vel_const}, this equation can be further expressed as
\begin{equation*}
    E_\sigma^n - \frac{1}{2}\rho_\sigma^n|\mathbf{u}_\sigma^n|^2 = p_0 \sum_{\ell=1}^2 \frac{z_{\ell,\sigma}^n}{\gamma_\ell - 1}+\sum_{\ell=1}^2 \frac{z_{\ell,\sigma}^n \gamma_\ell p_{w,\ell}}{\gamma_\ell - 1},
\end{equation*}
which indicates
\begin{equation*}
    p_\sigma^n = \frac{E_\sigma^n - \frac{1}{2}\rho_\sigma^n|\mathbf{u}_\sigma^n|^2 -\frac{\gamma_\sigma^n p_w}{\gamma_\sigma^n-1}}{\sum_{\ell=1}^2 \frac{z_{\ell,\sigma}^n}{\gamma_\ell - 1}} \equiv p_0.
\end{equation*}    
Thus, uniform pressure is also preserved. This completes the proof. 
\end{proof}

\subsection{Bound-preserving limiter}
For gas-gas/gas-liquid two-phase flows, 
although the oscillation-eliminating procedure effectively 
suppresses numerical oscillations, it does not guarantee that
 \begin{equation}\label{U_ingp}
    \mathbf{U}_\sigma^n(x,y) \in \mathcal G,\quad  \forall (x,y) \in \mathbb{S}_{i,j},
 \end{equation}
 where $\mathcal G=\left\{ \mathbf U| z_1\rho_1 > 0, z_2\rho_2 > 0, \ \tilde{c}^2(\mathbf U)>0,\ z_1\in(0,1) \right\}$
  and $\mathbb{S}_{i,j}$ represents the collection of Gauss-Lobatto quadrature points within the cell $I_{i,j}$.
 To achieve this, 
 we apply the bound-preserving limiter which is introduced in \cite{yan2024uniformly,wang2024bound,cheng2020discontinuous,cheng2022bound,wang2025bound} 
 after the oscillation-eliminating procedure. The computational details are given as follows:

\textbf{Step 1.} Enforce bounded volume fraction $\varepsilon_z \le z_1 \le 1 - \varepsilon_z$ and positive partial densities $z_\ell \rho_\ell \ge \varepsilon_\ell$ by modifying the OEDG solution $\mathbf{U}_\sigma^n(x,y)$ into
\begin{equation}\label{BP_Z}
    \widetilde{\mathbf{U}}_\sigma^n(x,y) = \overline{\mathbf{U}}_{i,j}^n + \theta_{z\rho} \left( \mathbf{U}_\sigma^n(x,y) - \overline{\mathbf{U}}_{i,j}^n \right), \quad \theta_{z\rho} = \min\{ \theta_{z_1\rho_1}, \theta_{z_2\rho_2}, \theta_{z_1} \},
\end{equation}
where $\varepsilon_\ell = \min\{ (\overline{z_\ell \rho_\ell})_{i,j}^n, 10^{-13} \}$, $\varepsilon_z = \min \{ (\overline{z_1})_{i,j}^n,1-(\overline{z_1})_{i,j}^n , 10^{-13} \}$, and
\begin{equation*}
    \theta_{z_\ell \rho_\ell} = \begin{cases}
    \frac{(\overline{z_\ell \rho_\ell})_{i,j}^n - \varepsilon_\ell}{(\overline{z_\ell \rho_\ell})_{i,j}^n - \min\limits_{(x,y) \in \mathbb{S}_{i,j}} (z_\ell \rho_\ell)_\sigma^n(x,y)}, & \text{if } \min\limits_{(x,y) \in \mathbb{S}_{i,j}} (z_\ell \rho_\ell)_\sigma^n(x,y) < \varepsilon_\ell, \\
    1, & \text{otherwise},
    \end{cases}
\end{equation*}
and
\begin{equation*}
    \theta_{z_1} = \begin{cases} 
    \frac{(\overline{z_1})_{i,j}^n - \varepsilon_z}{(\overline{z_1})_{i,j}^n - m_{i,j}}, & \text{if } m_{i,j} < \varepsilon_z, \\
    \frac{(1 - \varepsilon_z) - (\overline{z_1})_{i,j}^n}{M_{i,j} - (\overline{z_1})_{i,j}^n}, & \text{if } M_{i,j} > 1 - \varepsilon_z, \\
    1, & \text{otherwise},
    \end{cases}
\end{equation*}
with $M_{i,j} = \max_{(x,y) \in \mathbb{S}_{i,j}} (z_1)_\sigma^n(x,y)$ and $m_{i,j} = \min_{(x,y) \in \mathbb{S}_{i,j}} (z_1)_\sigma^n(x,y)$.

\noindent \textbf{Step 2.} Enforce $\tilde{c}^2 \ge \varepsilon_c$ by modifying the solution $\widetilde{\mathbf{U}}_\sigma^n(x,y)$ into
\begin{equation}\label{BP_P}
    \mathbf{U}_{i,j}^n(x,y) = \overline{\mathbf{U}}_{i,j}^n + \theta_{c} \left( \widetilde{\mathbf{U}}_\sigma^n(x,y) - \overline{\mathbf{U}}_{i,j}^n \right),
\end{equation}
where $\varepsilon_c = \min\{ \tilde{c}^2(\overline{\mathbf{U}}_{i,j}^n), 10^{-13} \}$ and $\theta_{c}$ is defined by
\begin{equation*}
    \theta_{c} = \begin{cases}
    \frac{\tilde{c}^2(\overline{\mathbf{U}}_{i,j}^n) - \varepsilon_c}{\tilde{c}^2(\overline{\mathbf{U}}_{i,j}^n) - \min\limits_{(x,y) \in \mathbb{S}_{i,j}} \tilde{c}^2(\widetilde{\mathbf{U}}_\sigma^n(x,y))}, & \text{if } \min\limits_{(x,y) \in \mathbb{S}_{i,j}} \tilde{c}^2(\widetilde{\mathbf{U}}_\sigma^n(x,y)) < \varepsilon_c, \\
    1, & \text{otherwise}.
    \end{cases}
\end{equation*}

It is straightforward to verify that the solution after applying the bound-preserving limiter 
satisfies \eqref{U_ingp} and maintains the cell-averaged solution. Furthermore, by defining the bound-preserving limiter on element $I_{i,j}$ as
\begin{equation}
    \Pi_h \left( \mathbf{U}_\sigma^n(x,y) \right) := \mathbf{U}_{i,j}^n(x,y) = \overline{\mathbf{U}}_{i,j}^n + \theta_{c} \theta_{z\rho} \left( \mathbf{U}_\sigma^n(x,y) - \overline{\mathbf{U}}_{i,j}^n \right),
\end{equation}
then we can establish its Abgrall condition-preserving property.

\begin{lemma}\label{line-invar-BP}
If the numerical solution satisfies a linear identity $\mathbf{\Lambda}\mathbf{U}_h(x,y) \equiv \mathbf{c}$ for a given $\mathbf{\Lambda} \in \mathbb{R}^{q \times 6}$, $\mathbf{c} \in \mathbb{R}^{q \times 1}$, and $q \in \mathbb{Z}^+$, then it follows that
\begin{equation}
\mathbf{\Lambda} \Pi_h (\mathbf{U}_h(x,y)) \equiv \mathbf{c},
\end{equation}
which implies the bound-preserving limiter is linearity-invariant.
\end{lemma}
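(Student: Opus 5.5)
The plan is to exploit the fact that $\Pi_h$ acts on each element $I_{i,j}$ as a single affine map, namely a convex combination of the solution and its cell average with a scalar weight. On $I_{i,j}$ we write $\theta := \theta_c\theta_{z\rho}\in[0,1]$. This is one scalar per element, common to all components of $\mathbf U$ and independent of $(x,y)$. Then
\begin{equation*}
\Pi_h(\mathbf U_h(x,y)) = (1-\theta)\,\overline{\mathbf U}_{i,j} + \theta\, \mathbf U_h(x,y).
\end{equation*}
Once this representation is in place, linearity invariance should follow from linearity of $\mathbf\Lambda$ together with the fact that cell averaging commutes with $\mathbf\Lambda$.

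The key steps, in order, are as follows.

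First, I would check that $\theta$ is indeed a scalar. Each of $\theta_{z_1\rho_1}$, $\theta_{z_2\rho_2}$, $\theta_{z_1}$ and $\theta_c$ is computed from the data of the element (extrema over $\mathbb S_{i,j}$ and cell averages) and is then applied uniformly to the whole vector $\mathbf U_\sigma - \overline{\mathbf U}_{i,j}$. Composing Step 1 and Step 2 gives $\overline{\mathbf U}+\theta_c(\overline{\mathbf U}+\theta_{z\rho}(\mathbf U-\overline{\mathbf U})-\overline{\mathbf U})$. This also uses the fact that Step 1 preserves the cell average, since $\theta_{z\rho}$ multiplies a zero-mean quantity; so the average in Step 2 equals that of Step 1, which justifies the combined formula.

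Second, I would average the identity over the element. Integrating $\mathbf\Lambda\mathbf U_h\equiv\mathbf c$ over $I_{i,j}$ and dividing by $|I_{i,j}|$ gives $\mathbf\Lambda\overline{\mathbf U}_{i,j}=\mathbf c$, because $\mathbf\Lambda$ is a constant matrix.

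Third, I would apply $\mathbf\Lambda$ to the representation:
\begin{equation*}
\mathbf\Lambda\Pi_h(\mathbf U_h) = (1-\theta)\mathbf\Lambda\overline{\mathbf U}_{i,j}+\theta\,\mathbf\Lambda\mathbf U_h = (1-\theta)\mathbf c+\theta\mathbf c=\mathbf c,
\end{equation*}
for every $(x,y)\in I_{i,j}$ and every element. This is the claim.

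I expect the only delicate point to be the first step, namely confirming that the limiter parameters really are component-independent scalars and that the cell average used in Step 2 coincides with $\overline{\mathbf U}_{i,j}^n$. A limiter applied componentwise with different $\theta$'s would break the argument. The definitions (\ref{BP_Z}) and (\ref{BP_P}) take a single minimum $\theta_{z\rho}$ and a single $\theta_c$ applied to the full state vector, so this holds. The value of $\theta$, including the degenerate case where it is forced to $1$, is irrelevant to the argument.
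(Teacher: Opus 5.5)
Your proof is correct and is essentially the argument the paper intends: the paper omits the proof as analogous to that of Lemma \ref{line-invar}, which rests on the same mechanism. That mechanism is a single cell-wise scalar factor, here $\theta=\theta_c\theta_{z\rho}$, applied uniformly to every component of $\mathbf{U}_h-\overline{\mathbf{U}}_{i,j}$; linearity of $\mathbf{\Lambda}$ together with $\mathbf{\Lambda}\overline{\mathbf{U}}_{i,j}=\mathbf{c}$ then yields the identity for any value of $\theta$.
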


\begin{theorem}\label{BP_Abgrall}
  The bound-preserving limiter \eqref{BP_Z} and \eqref{BP_P} satisfy the Abgrall condition around an isolated material interface, that is
\begin{equation*}
    \text{if } \mathbf{u}_\sigma^n(x,y) \equiv \mathbf{u}_0, \quad p_\sigma^n(x,y) \equiv p_0, \quad \text{then } \mathbf{u}_{i,j}^n(x,y) \equiv \mathbf{u}_0, \quad p_{i,j}^{n+1}(x,y) \equiv p_0,
\end{equation*}
where $\mathbf{u}_{i,j}^n := \mathbf{u}(\Pi_h \mathbf{U}^n_\sigma(x,y))$ and $p_{i,j}^{n+1} := p(\Pi_h \mathbf{U}^n_\sigma(x,y))$.
\end{theorem}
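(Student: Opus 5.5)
The plan mirrors the proof of Theorem \ref{OEDG_Abgrall}, with Lemma \ref{line-invar-BP} taking the place of Lemma \ref{line-invar}. I would first establish Lemma \ref{line-invar-BP}, since everything else follows from it. On each element $I_{i,j}$ the limiter is an affine map,
\[
\Pi_h(\mathbf U_h)=(1-\theta)\overline{\mathbf U}_{i,j}+\theta\,\mathbf U_h,\qquad \theta=\theta_c\theta_{z\rho}\in[0,1].
\]
If $\mathbf\Lambda\mathbf U_h(x,y)\equiv\mathbf c$ pointwise, then averaging over $I_{i,j}$ gives $\mathbf\Lambda\overline{\mathbf U}_{i,j}=\mathbf c$. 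Hence
\[
\mathbf\Lambda\Pi_h(\mathbf U_h)=(1-\theta)\mathbf c+\theta\mathbf c=\mathbf c .
\]
This argument does not care how $\theta$ depends nonlinearly on $\mathbf U_h$, because $\theta$ is a single scalar per cell.

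Next comes velocity. Since $\mathbf u_\sigma^n\equiv\mathbf u_0$, we have $(\rho\mathbf u)_\sigma^n-\mathbf u_0(z_1\rho_1+z_2\rho_2)_\sigma^n\equiv 0$, which is a linear identity in $\mathbf U$. Lemma \ref{line-invar-BP} then gives $(\rho\mathbf u)_{i,j}^n=\mathbf u_0\rho_{i,j}^n$. Step 1 of the limiter guarantees $\rho_{i,j}^n>0$, so we may divide and obtain $\mathbf u_{i,j}^n\equiv\mathbf u_0$.

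For pressure I would use the same linear identity as in the proof of Theorem \ref{OEDG_Abgrall}:
\[
E-\tfrac12|\mathbf u_0|^2(z_1\rho_1+z_2\rho_2)=\sum_{\ell=1}^2 z_\ell\frac{p_0+\gamma_\ell p_{w,\ell}}{\gamma_\ell-1}.
\]
After substituting $z_2=1-z_1$, this is affine in $\mathbf U$ and holds for $\mathbf U_\sigma^n$. Applying Lemma \ref{line-invar-BP} transfers it to $\Pi_h\mathbf U_\sigma^n$. Combining it with the preserved velocity, the Tammann mixture relation \eqref{eq:mixture_parameters}, and $p=\rho e(\gamma-1)-\gamma p_w$, and solving for the pressure, gives $p_{i,j}^n\equiv p_0$. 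The denominator $\sum_\ell z_\ell/(\gamma_\ell-1)$ is positive because Step 1 keeps $z_1\in(0,1)$.

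The only potentially delicate point is that the limiter works in two sequential stages (Step 1 then Step 2). I would handle this by noting that each stage is itself an affine contraction toward the same cell average, since Step 1 preserves cell averages. So either the composed form with $\theta_c\theta_{z\rho}$ applies directly, or the lemma can be applied twice. Beyond this, the argument is routine.
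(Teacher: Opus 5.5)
Your proposal is correct and follows the route the paper intends. The paper omits the proof of Theorem \ref{BP_Abgrall}, saying only that it mirrors Lemma \ref{line-invar} and Theorem \ref{OEDG_Abgrall}, and your cell-wise affine-contraction proof of Lemma \ref{line-invar-BP}, followed by the velocity identity and the Tammann pressure identity, reproduces that argument step by step.

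One small correction: the limiter enforces $\rho>0$ and $z_1\in(0,1)$ only at the Gauss--Lobatto points $\mathbb{S}_{i,j}$, so Step 1 does not by itself justify dividing by $\rho_{i,j}^n$ or by $\sum_\ell z_\ell/(\gamma_\ell-1)$ at other points. Justify it instead from the convex-combination form $(1-\theta)\overline{\mathbf U}_{i,j}+\theta\,\mathbf U_\sigma^n$: both quantities are affine in $\mathbf U$ and positive at the cell average, and, implicitly in the hypothesis, at $\mathbf U_\sigma^n$.
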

The proofs of Lemma \ref{line-invar-BP} and Theorem \ref{BP_Abgrall}  are similar to that of Lemma \ref{line-invar} and Theorem \ref{OEDG_Abgrall}, respectively, are therefore omitted here.

\subsection{Summary of the numerical algorithm}
In this section, we present the computational procedure flow with limiters in Table \ref{tab:computational_flow}. Based on second-order Strang 
splitting method, we have three steps: the first and third step are implicit solver, 
and the second step is a quasi-conservative DG solver. To suppress numerical oscillations and maintain physical constraint of the solution, we use the OEDG procedure and bound-preserving limiter after each stage of the Runge-Kutta scheme.

\begin{table}[H]
\centering
\caption{Computational Procedure Flow with Limiters.}
\label{tab:computational_flow}
\renewcommand{\arraystretch}{1.5}
\begin{tabular}{lll}
\toprule
\textbf{Computational Steps} &\textbf{Solvers} & \textbf{Limiters} \\
\midrule
1. Strang Splitting Step 1 : &Implicit solver for Eq. \eqref{kappa} $(\Delta t/2)$ &  \textbf{OEDG+BP limiter} \\
\addlinespace
2. Strang Splitting Step 2 : &DG solver for Eq. \eqref{eq:RHS} $(\Delta t)$&  
   \textbf{OEDG+BP limiter} \\
\addlinespace
3. Strang Splitting Step 3 : &Implicit solver for Eq. \eqref{kappa} $(\Delta t/2)$ &  \textbf{OEDG+BP limiter} \\
\bottomrule
\end{tabular}
\end{table}

\section{Numerical experiments}\label{sec-experiments}
In this section, 
a series of one- and two-dimensional benchmark cases are presented to 
validate the proposed BP-OEDG method coupled with the 
operator splitting method. A uniform time step scaling of $\Delta t \propto \Delta x$ with a CFL 
number of $0.1$ is applied across all simulations. 
While the operator splitting strategy inherently 
degrades the interfacial temporal accuracy 
to second order under this scaling, 
this level of accuracy is practically 
adequate for capturing contact discontinuities in two-phase flow dynamics.

\subsection{One-dimensional smooth advaction problem}\label{sec-gas_gas_smooth_cases}
The one-dimensional smooth test case \cite{yan2024uniformly} is 
simulated to verify the accuracy of the proposed method. 
The computational domain is defined as $[0, 2]$ with 
periodic boundary conditions, and the initial conditions are prescribed as follows:
\[
(\rho_1, \rho_2, u, p, z_1) = \bigl(1,\; 1.5 + 0.4\cos(\pi x),\; 1 + 0.4\cos(\pi x),\; 1,\; 0.5 + 0.4\sin(\pi x) \bigr),
\]
where the parameters are \(\gamma_1 = 1.4, p_{w,1} = 0\) and \(\gamma_2 = 1.6, p_{w,2} = 0\). 
The problem is simulated until \(t = 0.05\). 
Since the velocity field is spatially varying, 
the non-conservative term $\kappa  \nabla \cdot \mathbf{u}$ cannot be omitted.

The $L^1$, $L^2$, and $L^\infty$ errors 
for the volume fraction $z_1$, along with the empirical convergence rates, 
are presented in Table \ref{tab:gas-gas-error_convergence}. Here, the reference solution is generated by 
solving Kapila's model \cite{zhang2023analysis} 
using a highly refined BP-OEDG($P^2$) discretization with 
30,000 uniform cells.
The results demonstrate that the proposed method 
successfully achieves the expected convergence rates. 
Notably, while the Strang splitting and the implicit solver formally 
restrict the scheme to second-order temporal accuracy, 
the temporal error remains subdominant to the 
spatial discretization error. 
Consequently, the DG($P^2$) configuration 
with a $\Delta t \propto \Delta x$ scaling still 
manifests optimal third-order convergence.
\begin{table}[H]
  \centering
  \caption{Numerical errors and convergence orders for Case \ref{sec-gas_gas_smooth_cases}.}
  \label{tab:gas-gas-error_convergence}
  \begin{tabular}{ccccccc}
    \toprule
    $N_x$ & $L^1$ Error & Order & $L^2$ Error & Order & $L^\infty$ Error & Order \\
    \midrule
    \multicolumn{7}{c}{BP-OEDG($P^1$) method}  \\
    20  & 1.52E-3 & -    & 1.74E-3 & -    & 5.37E-3 & -    \\
    40  & 3.68E-4 & 2.05 & 4.34E-4 & 2.01 & 1.35E-3 & 1.99 \\
    80  & 9.14E-5 & 2.01 & 1.08E-4 & 2.00 & 3.42E-4 & 1.99 \\
    160 & 2.28E-5 & 2.00 & 2.71E-5 & 2.00 & 8.56E-5 & 2.00 \\
    320 & 5.71E-6 & 2.00 & 6.77E-6 & 2.00 & 2.14E-5 & 2.00 \\
    \midrule
\multicolumn{7}{c}{BP-OEDG($P^2$) method} \\
20  & 9.92E-5 & -    & 9.53E-5 & -    & 2.90E-4 & -    \\
    40  & 1.30E-5 & 2.93 & 1.38E-5 & 2.79 & 4.55E-5 & 2.67 \\
    80  & 1.67E-6 & 2.96 & 1.81E-6 & 2.93 & 6.45E-6 & 2.82 \\
    160 & 2.10E-7 & 2.99 & 2.28E-7 & 2.99 & 8.23E-7 & 2.97 \\
    320 & 2.64E-8 & 2.99 & 2.86E-8 & 2.99 & 1.04E-7 & 2.98 \\
    \bottomrule
  \end{tabular}
\end{table}

\subsection{One-dimensional gas-liquid isolated interface}\label{isolated-case}
This case is widely used in the literature 
\cite{yan2024uniformly,cheng2020quasi,pandare2018robust,johnsen2006implementation} to 
verify the Abgrall condition across an isolated material interface. 
The initial conditions are given as follows
\[
(\rho_1, \rho_2, u, p, z_1) = 
\begin{cases}
(1.0, 1000.0, 2.0, 1.0, 1.0 - 10^{-10}), & -5 < x \leq 0, \\
(1.0, 1000.0, 2.0, 1.0, 10^{-10}), & 0 < x < 5,
\end{cases}
\]
where the parameters are set to
$\gamma_1 = 1.4$, $p_{w, 1} = 0.0$ and $\gamma_2 = 4.4$, $p_{w, 2} = 6000$. 
The computational domain is defined as $[-5,5]$. 
The simulations are performed using the BP-OEDG($P^2$) method with 
100, 200, and 400 elements, up to a final time of $t=1.0$. Figure 
\ref{fig:isolated_distributions} compares the numerical 
solutions for density, pressure, velocity, and 
volume fraction against the reference solutions. 
As the mesh is refined, the numerical solutions consistently 
converge to the reference solutions. Notably, 
the results also confirm that the proposed method successfully 
preserves the Abgrall condition at the material interface.
\begin{figure}\label{fig:isolated_distributions}
  \centering
  \begin{minipage}[b]{0.48\textwidth}
    \centering
    \includegraphics[width=\linewidth]{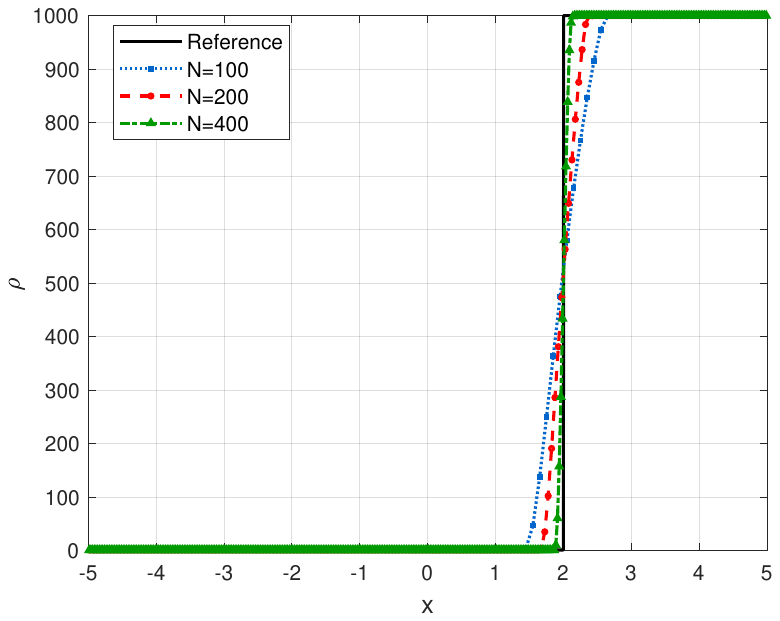}
  \end{minipage}
  \hfill
  \begin{minipage}[b]{0.48\textwidth}
    \centering
    \includegraphics[width=\linewidth]{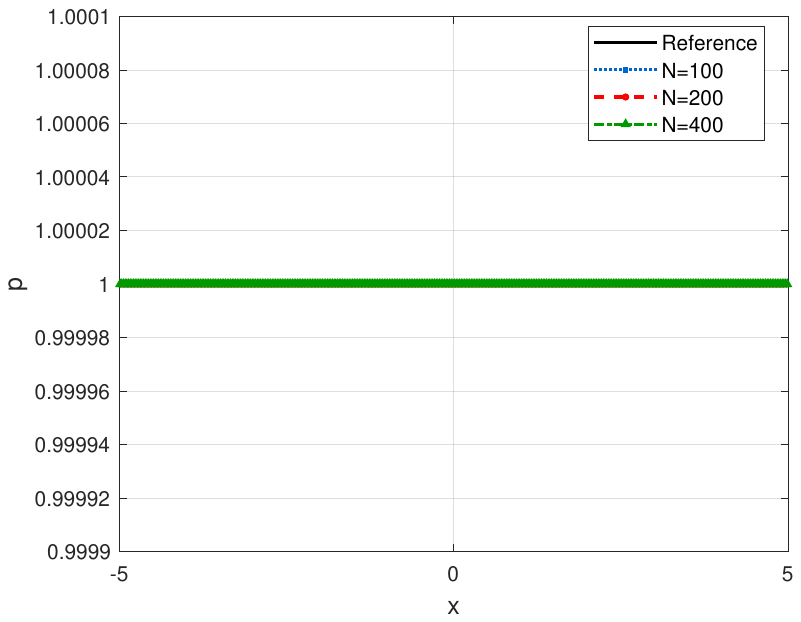}
  \end{minipage}
  
  \vspace{1em} 
  
  \begin{minipage}[b]{0.48\textwidth}
    \centering
    \includegraphics[width=\linewidth]{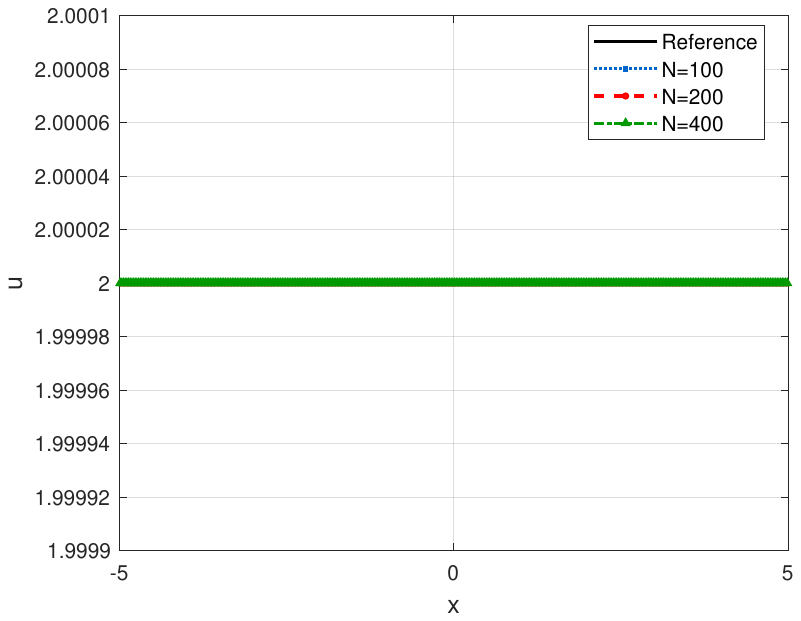}
  \end{minipage}
  \hfill
  \begin{minipage}[b]{0.48\textwidth}
    \centering
    \includegraphics[width=\linewidth]{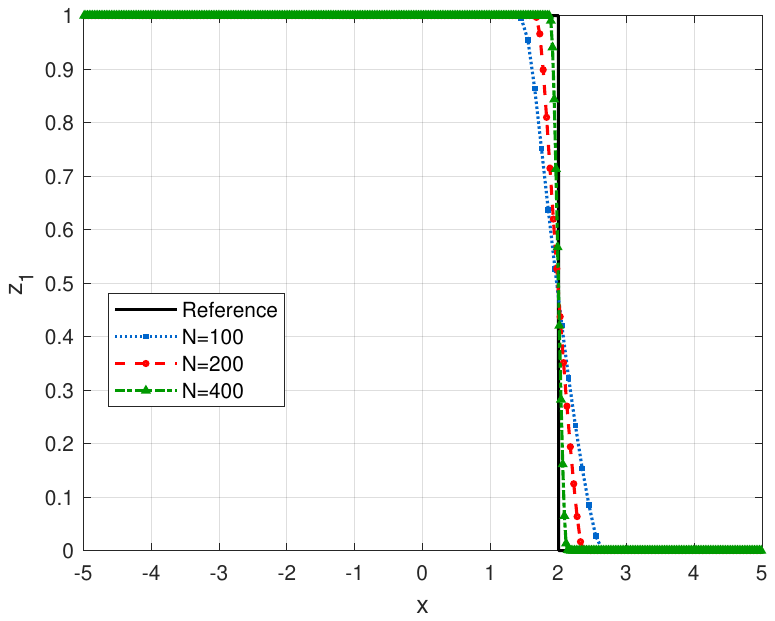}
  \end{minipage}
  
  \caption{Numerical results computed by the DG ($P^2$) in Case \ref{isolated-case}. 
  Top left: mixture density $\rho$. Top right: pressure $p$. Bottom left: velocity $u$. Bottom right: volume fraction $z_1$.}
  \label{fig:isolated_distributions}
\end{figure}

\subsection{One-dimensional double rarefaction problem}\label{doublewave-case}
This test case \cite{cheng2020quasi,zhang2023analysis,yan2024uniformly,zhang2010positivity} is designed to verify 
the bound-preserving property of the BP-OEDG method when 
coupled with the operator splitting method. 
The initial conditions are prescribed as follows
\[
(\rho_1, \rho_2, u, p, z_1) = 
\begin{cases}
(2.0, 2.0, -1.0, 0.2, 1.0 - 10^{-6}), & -1 < x \leq 0, \\
(2.0, 2.0, 1.0, 0.2, 10^{-6}), & 0 < x < 1,
\end{cases}
\]
where the parameters are set to $\gamma_1 = 1.4$, $p_{w, 1} = 0.0$ 
and $\gamma_2 = 4.4$, $p_{w, 2} = 0.0$.
The computational 
domain is defined as $[-1, 1]$ and is 
discretized using a uniform mesh of 2000 elements. 
The simulation is advanced to a final 
time of $t = 0.4$. The BP-OEDG($P^2$) scheme 
is employed to solve the Kapila five-equation model (denoted as ``BP-OEDG"). 
For comparison, a quasi-conservative DG($P^2$) scheme is 
applied to the five-equation transport model on 
the identical mesh (denoted as ``QC"). 

Figure~\ref{fig:doublewave_distributions} presents 
the computed profiles for 
pressure, internal energy, velocity, and 
mixture density. It is evident that both 
the 'BP-OEDG' and 'QC' numerical 
solutions show excellent agreement with the reference solution. 
Notably, the results illustrated in Figure~\ref{fig:doublewave_z1} reveal 
a significant discrepancy 
in the distribution of the volume 
fraction $z_1$ between the two models. 
This deviation is primarily attributed 
to the distinct compressibility of the two fluid phases. 
\begin{figure}[H]
  \centering
  \begin{minipage}[b]{0.48\textwidth}
    \centering
    \includegraphics[width=\linewidth]{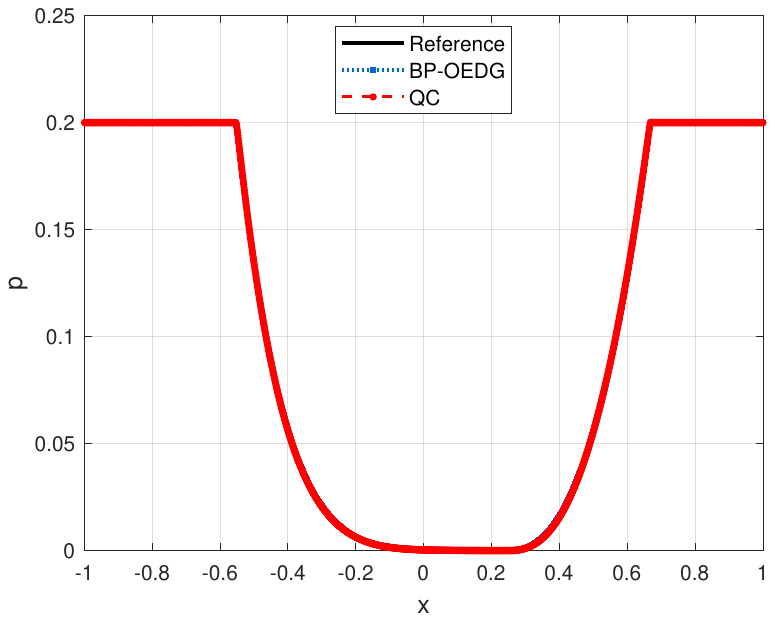}
  \end{minipage}
  \hfill
  \begin{minipage}[b]{0.48\textwidth}
    \centering
    \includegraphics[width=\linewidth]{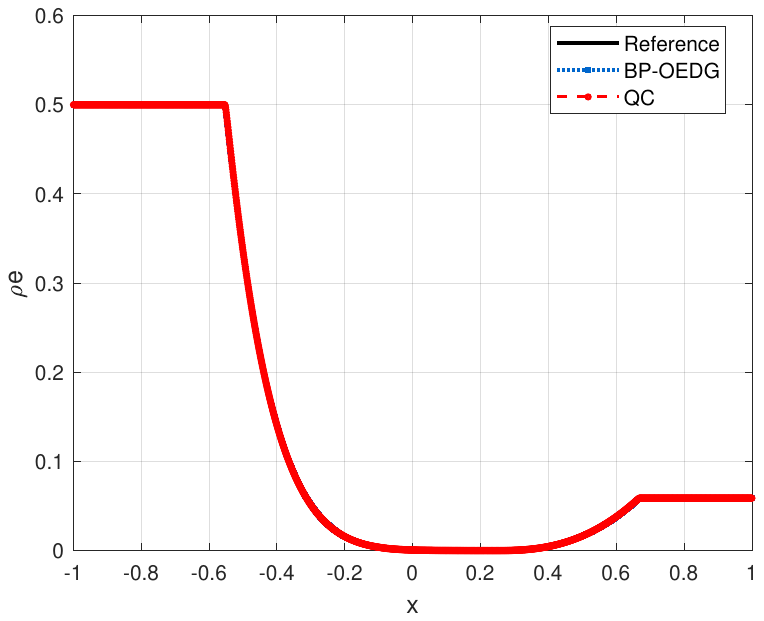}
  \end{minipage}

  \vskip\baselineskip 

  \begin{minipage}[b]{0.48\textwidth}
    \centering
    \includegraphics[width=\linewidth]{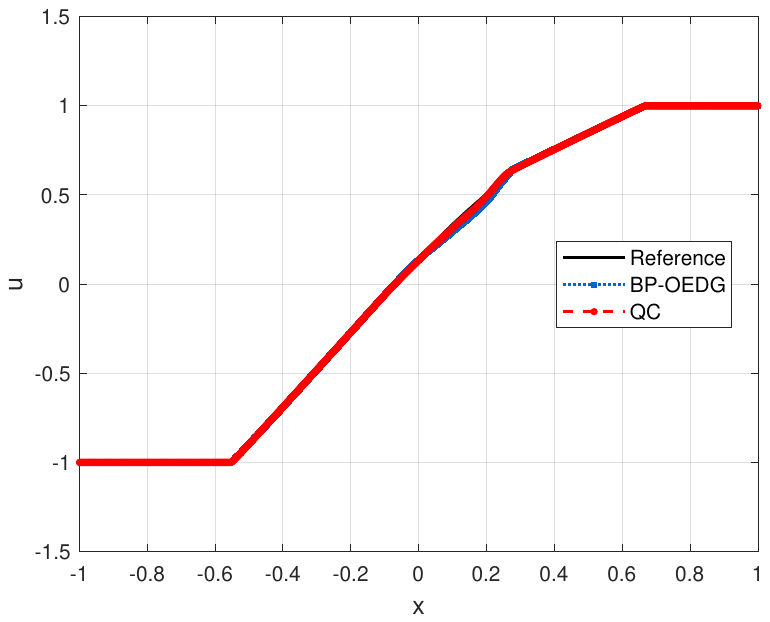}
  \end{minipage}
  \hfill
  \begin{minipage}[b]{0.48\textwidth}
    \centering
    \includegraphics[width=\linewidth]{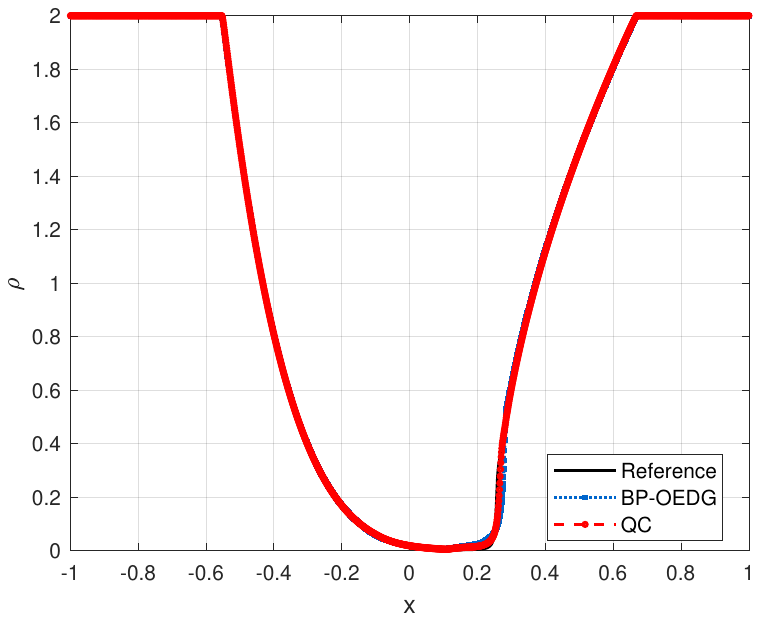}    
  \end{minipage}

  \caption{Numerical results computed by the DG ($P^2$) in case \ref{doublewave-case}. 
  Top left: pressure $p$. Top right: internal energy $\rho e$.
  Bottom left: velocity $u$. Bottom right: mixture density $\rho$.}
  \label{fig:doublewave_distributions}
\end{figure}

\begin{figure}[H]
  \centering
  \includegraphics[width=0.5\textwidth]{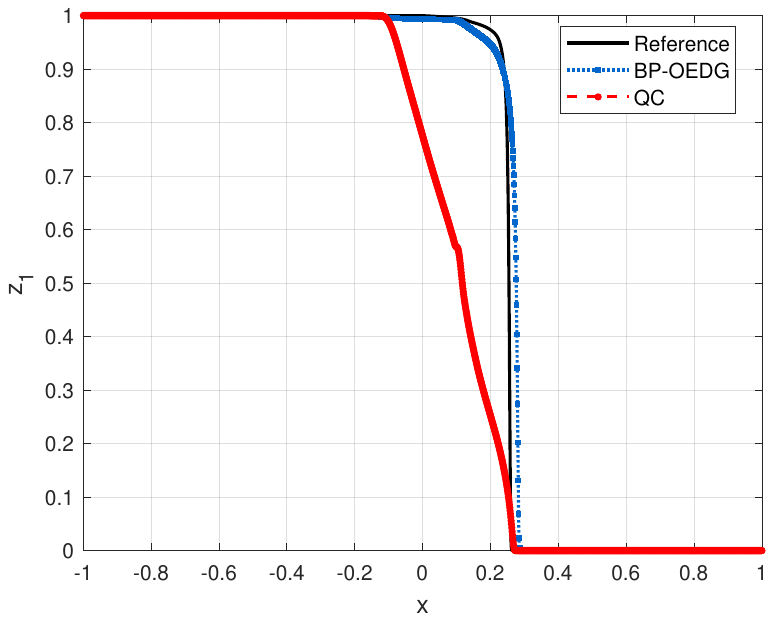}
  \caption{Volume fraction $z_1$ in Case \ref{doublewave-case}.}
  \label{fig:doublewave_z1}
\end{figure}

\subsection{One-dimensional gas-liquid Riemman problem}\label{gasliquid-case}
This benchmark considers 
a gas-liquid Riemann problem, 
akin to the configurations extensively 
investigated in the literature \cite{luo2004computation,xu2017explicit,liu2003ghost}. 
The initial conditions are prescribed as follows:
\[
(\rho_1, \rho_2, u, p, z_1) = 
\begin{cases}
(1.27, 1.0, 0.0, 8000.0, 1.0 - 10^{-10}), & -5 < x \leq 0, \\
(1.27, 1.0, 0.0, 1.0, 10^{-10}), & 0 < x < 5,
\end{cases}
\]
where the parameters are $\gamma_1 = 1.4$, $p_{w,1} = 0.0$ 
and $\gamma_2 = 7.15$, $p_{w,2} = 3309$. 
The computational domain is defined as $[-5, 5]$, 
and the simulation is advanced to a 
final time of $t = 0.015$. The computations are performed 
using the BP-OEDG($P^2$) method on a uniform mesh consisting of 800 elements. 

The computed profiles of pressure, 
mixture density, energy density, and 
volume fraction are compared against the 
reference solutions in Figure~\ref{fig:gas_liquid_riemann_distributions}. 
The numerical results exhibit 
excellent agreement with the reference solutions, 
effectively demonstrating the high-order 
accuracy and robustness of the proposed BP-OEDG method.

\begin{figure}\label{fig:gas_liquid_riemann_distributions}
  \centering
  \begin{minipage}[b]{0.48\textwidth}
    \centering
    \includegraphics[width=\linewidth]{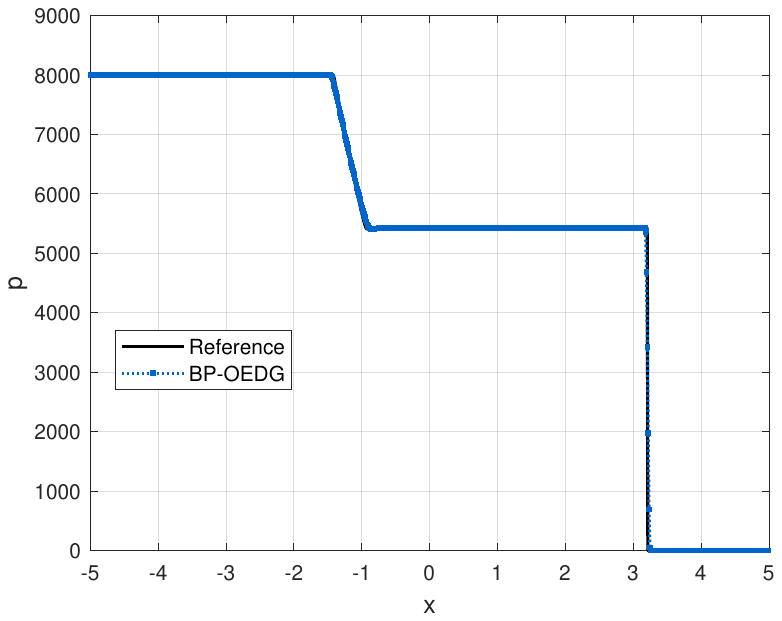}
  \end{minipage}
  \hfill
  \begin{minipage}[b]{0.48\textwidth}
    \centering
    \includegraphics[width=\linewidth]{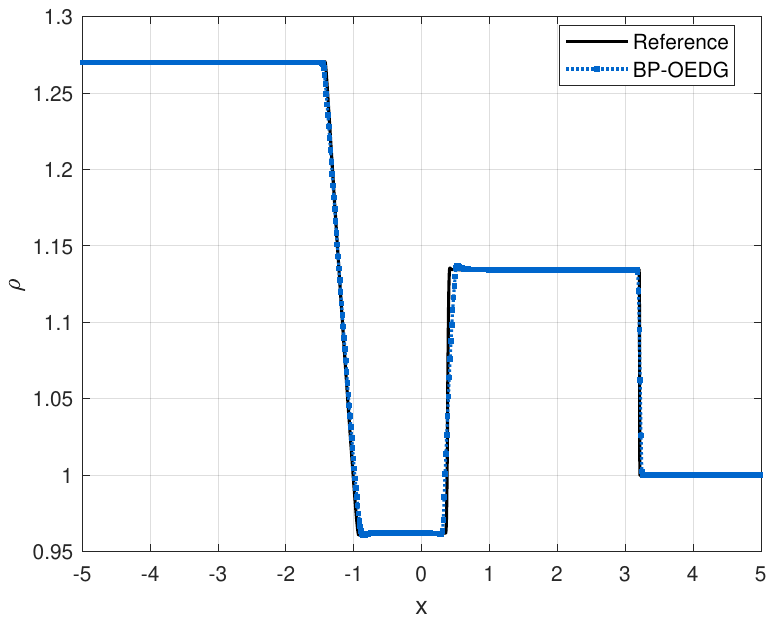}
  \end{minipage}

  \vspace{1em}

  \begin{minipage}[b]{0.48\textwidth}
    \centering
    \includegraphics[width=\linewidth]{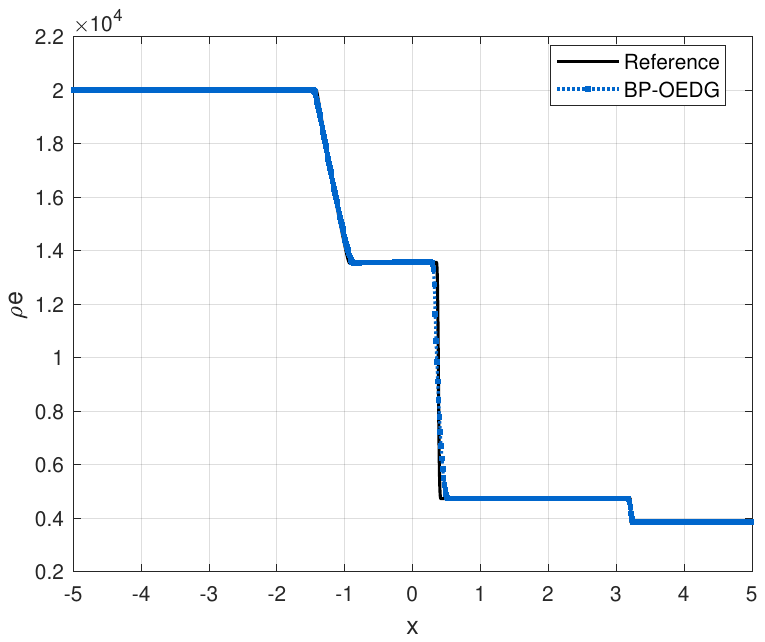}   
  \end{minipage}
  \hfill
  \begin{minipage}[b]{0.48\textwidth}
    \centering
    \includegraphics[width=\linewidth]{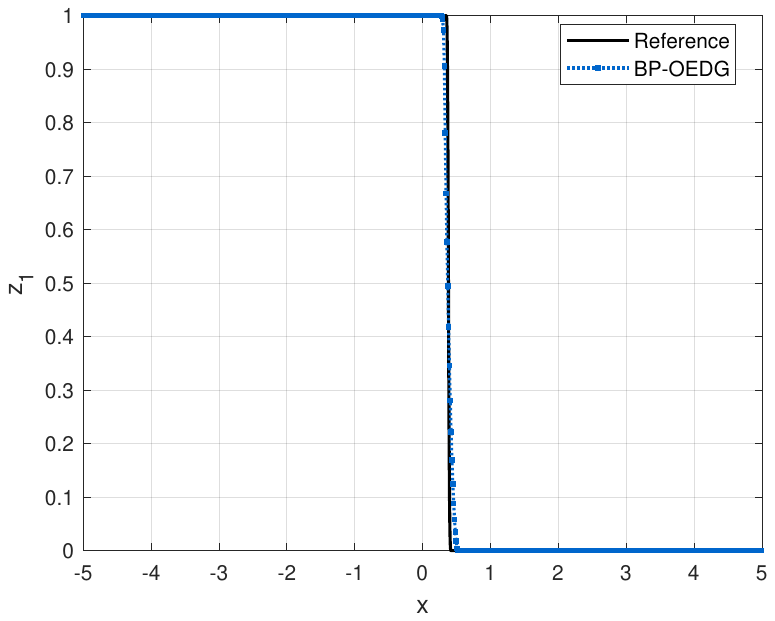}
  \end{minipage}
  
  \caption{Numerical results computed by the BP-OEDG ($P^2$) in Case \ref{gasliquid-case}. 
  Top left: pressure $p$. Top right: mixture density $\rho$. 
  Bottom left: energy density $\rho e$. Bottom right: volume fraction $z_1$.}
  \label{fig:gas_liquid_riemann_distributions}
\end{figure}

\subsection{One-dimensional gas-liquid shock tube} 
This benchmark \cite{cheng2020quasi,saurel1999simple,cheng2014positivity} considers a one-dimensional
gas-liquid shock tube problem characterized by 
exceptionally high density and pressure ratios. 
The initial conditions are prescribed as follows
\[
(\rho_1, \rho_2, u, p, z_1) = 
\begin{cases}
(1.0, 200.0, 0.0, 10^5, 1.0 - 10^{-6}), & -1 < x \leq 0, \\
(1.0, 200.0, 0.0, 10^9, 10^{-6}), & 0 < x < 1,
\end{cases}
\]
where the parameters are $\gamma_1 = 1.4$, $p_{w,1} = 0.0$ and $\gamma_2 
= 4.4$, $p_{w,2} = 6000$. 
The computational domain is 
defined as $[-1, 1]$ and is discretized 
using a uniform mesh consisting of 2000 elements. 
The computations are performed using the BP-OEDG($P^2$) scheme 
up to a final simulation time of $t = 0.0002$. 
The exact solution to this 
problem develops a left-moving 
shock wave, a right-moving rarefaction wave, 
and a material interface (contact discontinuity) 
propagating in between. 
The computed profiles for the 
logarithm of pressure, mixture density, 
velocity, and volume fraction are compared 
against the reference exact solutions 
in Figure~\ref{fig:shocktube_distributions}. 
It is evident that the proposed BP-OEDG method 
accurately resolves the shock, 
the contact discontinuity, and the rarefaction wave, 
exhibiting excellent agreement with the reference solutions.

\begin{figure}[H]\label{fig:shocktube_distributions}
  \centering
  \begin{minipage}[b]{0.48\textwidth}
    \centering
    \includegraphics[width=\linewidth]{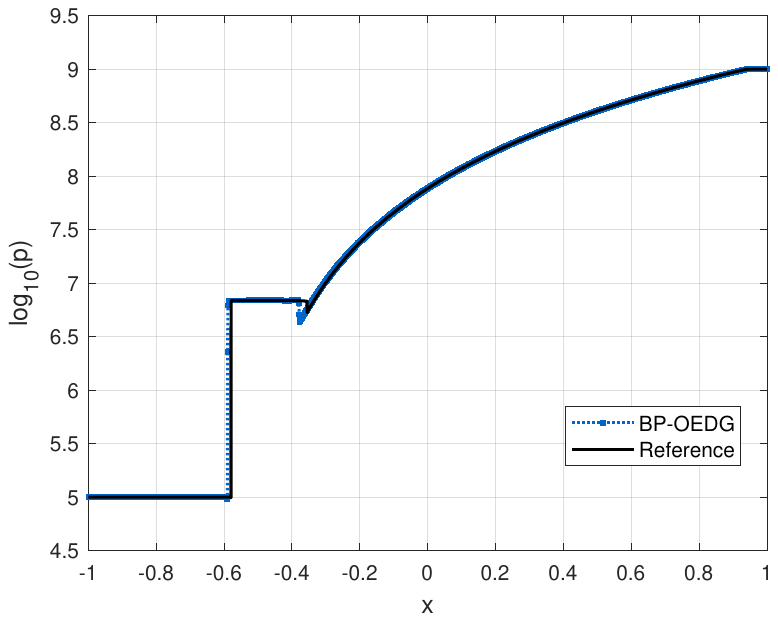}
  \end{minipage}
  \hfill
  \begin{minipage}[b]{0.48\textwidth}
    \centering
    \includegraphics[width=\linewidth]{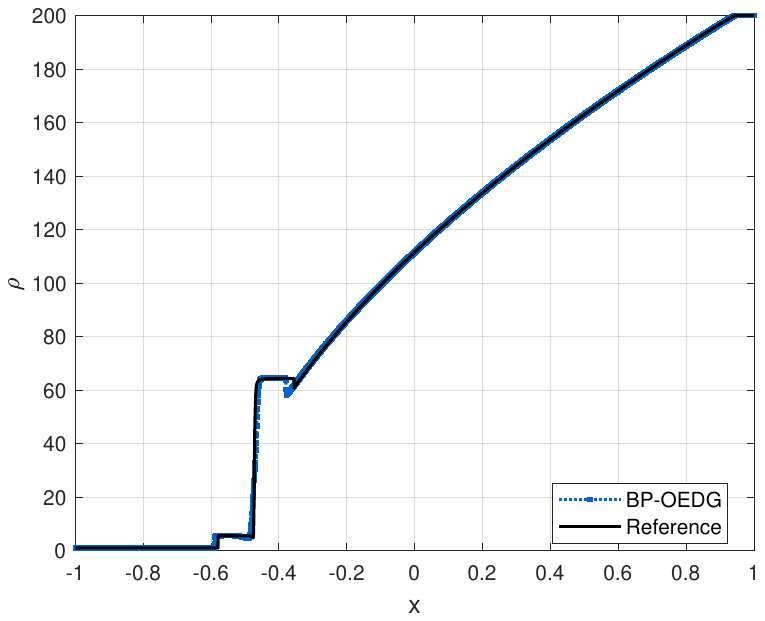}
  \end{minipage}

  \vspace{1em}

  \begin{minipage}[b]{0.48\textwidth}
    \centering
    \includegraphics[width=\linewidth]{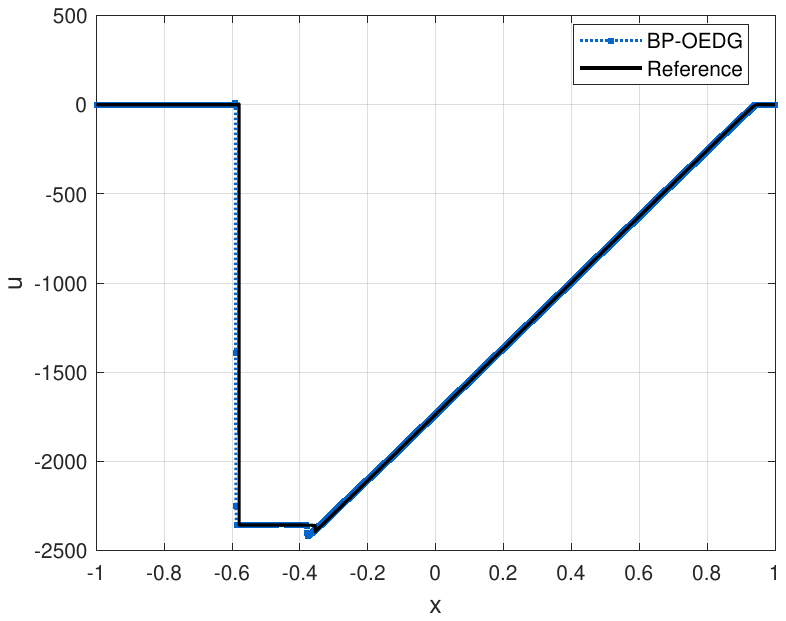}
  \end{minipage}
  \hfill
  \begin{minipage}[b]{0.48\textwidth}
    \centering
    \includegraphics[width=\linewidth]{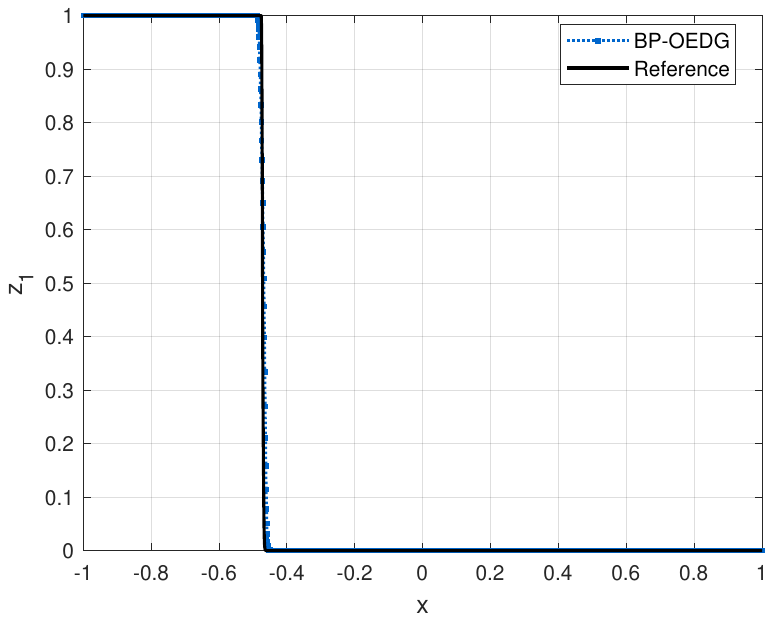}
  \end{minipage}
  
\caption{Numerical results computed by the DG ($P^2$) in Case \ref{doublewave-case}. 
Top left: logarithm of pressure $\log_{10}(p)$. Top right: mixture density $\rho$. 
Bottom left: velocity $u$. Bottom right: volume fraction $z_1$.}
  \label{fig:shocktube_distributions}
\end{figure}

\subsection{Two-dimensional isentropic vortex problem}\label{vortex-case}
To verify the high-order accuracy away from the material interfaces, we 
consider the isentropic vortex problem \cite{zhang2023analysis,white2025high,
zhang2012positivity,cai2016positivity}, which is designed for single-fluid flows. 
The uniform flow field is given by values
 $\rho_{\infty} = p_{\infty} = u_{\infty} = = v_{\infty} = 1$ and
  $T_{\infty} = p_{\infty}/ \rho_{\infty}$. 
  A vortex perturbation centered at $(x_c,y_c)$ is added to the uniform flow 

\begin{equation*}
[\delta u, \delta v]^{\top} = \frac{\epsilon}{2\pi} e^{0.5(1-r^2)} [-(y-y_c), (x-x_c)]^{\top}, \quad \delta T = -\frac{(\gamma - 1)\epsilon^2}{8\gamma\pi^2} e^{(1-r^2)},
\end{equation*}
where $r = \sqrt{(x-x_c)^2 + (y-y_c)^2}$ is the distance from the vortex center, 
and $\epsilon = 10.0828$ is the perturbation intensity. To satisfy the 
single-fluid configuration,
we set $z_1 = 0.5, \gamma_1 = \gamma_2 = \gamma =1.4$. 
Since the flow field is isentropic, we have 
\begin{equation*}
\rho = (T_{\infty} + \delta T)^{\frac{1}{\gamma-1}}, \quad u = u_{\infty} + \delta u, \quad v = v_{\infty} + \delta v, \quad p = \rho^{\gamma}.
\end{equation*}

The computation domain is \([-5, 5]\times[-5,5]\) with periodic boundary conditions, the 
vortex is initialized at $(x_c,y_c)=(0,0)$, and the output time is $T = 0.05$. 
Under the initial conditions, the lowest density of the exact solution are $7.8 \times 10^{-15}$ \cite{zhang2012positivity,cai2016positivity}, and therefore the bound-preserving limiter is required to prevent the emergence of non-physical values.
Ideally, the vortex profile should 
undergo pure advection without 
any distortion over time, 
meaning that the exact solution at time $t$ is simply a 
rigid translation of the initial condition by $t(u_{\infty}, v_{\infty})$, 
as schematically depicted in Figure~\ref{fig:isentropic_vortex_setup}. 
The numerical results are listed in Table~\ref{tab:isentropic_vortex}, 
which include the convergence order of density for the BP-OEDG method, and the lowest density at final time during the computation. 

\begin{figure}[htbp]
    \centering
    \begin{tikzpicture}[scale=0.5, >=stealth]

        \def\side{8}       
        \def\half{4}       
        
        \def\vorZX{4}      
        \def\vorZY{4}
        
        \def\vorTX{5.5}    
        \def\vorTY{5.5}

        \fill[blue!5] (0,0) rectangle (\side, \side);

        \draw[thick] (0,0) rectangle (\side, \side);

        \draw[thick, fill=white] (\vorZX, \vorZY) circle (1.2);
        \draw[dashed, gray] (\vorZX, \vorZY) circle (0.8);
        \draw[dashed, gray] (\vorZX, \vorZY) circle (0.4);
        \fill (\vorZX, \vorZY) circle (0.08); 
        
        \draw[->, semithick, black!70] (\vorZX+0.8, \vorZY) arc [start angle=0, end angle=90, radius=0.8];
        \draw[->, semithick, black!70] (\vorZX-0.8, \vorZY) arc [start angle=180, end angle=270, radius=0.8];

        \draw[thick, dashed, fill=white, fill opacity=0.6] (\vorTX, \vorTY) circle (1.2);
        \draw[dotted, gray] (\vorTX, \vorTY) circle (0.8);
        \fill [gray] (\vorTX, \vorTY) circle (0.08);

        \draw[->, ultra thick, red] (\vorZX, \vorZY) -- (\vorTX, \vorTY);

        \node[below, scale=0.8, font=\bfseries] at (\vorZX, \vorZY-1.3) {Initial Vortex $(t=0)$};
        \node[above right, scale=0.8, font=\bfseries, text=black!60] at (\vorTX-5.0, \vorTY+1.5) {Final Vortex $(t>0)$};
        
    \end{tikzpicture}
    \caption{Schematic layout of the isentropic vortex transport problem.}
    \label{fig:isentropic_vortex_setup}
\end{figure}

\begin{figure}[htbp]
    \centering
    \begin{tikzpicture}[scale=25, >=stealth] 

        \def\xa{-0.3}
        \def\xb{0.15}
        \def\ya{-0.0445}
        \def\yb{0.0445}
        \def\xs{0.05}     
        \def\rb{0.025}    

        \fill[gray!10] (\xa, \ya) rectangle (\xs, \yb);
        \fill[gray!30] (\xs, \ya) rectangle (\xb, \yb);
        \fill[blue!15, draw=blue, thick] (0,0) circle (\rb);

        \node[blue!80!black, font=\bfseries] at (0,0) {He};

        \draw[thick] (\xa, \ya) rectangle (\xb, \yb);

        \draw[red, ultra thick] (\xs, \ya) -- (\xs, \yb);
        
        \draw[->, ultra thick, red] (\xs+0.03, 0) -- (\xs-0.01, 0);

        \node[scale=0.7] at (-0.15, 0.025) {Pre-shock Air};
        \node[scale=0.7] at (0.1, 0.025) {Post-shock Air};

        \node[below, scale=0.8] at (\xa, \ya) {$\xa$};
        \node[below, scale=0.8] at (\xb, \ya) {$\xb$};
        \node[below, scale=0.8] at (0, \ya) {$0$};
        \node[below, scale=0.8, red] at (\xs, \ya) {$\xs$}; 
        
        \foreach \y in {\ya, 0, \yb} {
            \draw (\xa, \y) -- (\xa-0.005, \y); 
        }
        \node[left, scale=0.8] at (\xa-0.005, \ya) {$\ya$};
        \node[left, scale=0.8] at (\xa-0.005, \yb) {$\yb$};
        \node[left, scale=0.8] at (\xa-0.005, 0) {$0$};

    \end{tikzpicture}
    \caption{Initial configuration for shock-helium bubble interaction.}
    \label{fig:shock_bubble_init_he}
\end{figure}

\begin{table}[H]
    \centering
    \caption{Error and convergence order of density for Case \ref{vortex-case}.}
    \label{tab:isentropic_vortex}
    \begin{tabular}{lcccccc}
        \toprule
        \multicolumn{6}{c}{BP-OEDG($P^1$) method} \\
        \midrule
        $N_x \times N_y$ & $L^2$ Error & Order & $L^\infty$ Error & Order & Minimum $\rho$  \\
        \midrule
        $40 \times 40$   & 1.89E-2 &    -  & 4.53E-2 &   -   & 4.40E-4 \\
        $80 \times 80$   & 4.41E-3 & 2.10 & 7.55E-3 & 2.58 & 2.70E-5 \\
        $160 \times 160$ & 1.09E-3 & 2.02 & 1.74E-3 & 2.12 & 1.36E-6 \\
        $320 \times 320$ & 2.74E-4 & 1.99 & 4.35E-4 & 2.00 & 2.00E-7 \\       

        \midrule
        \multicolumn{6}{c}{BP-OEDG($P^2$) method} \\
        $40 \times 40$   & 1.20E-3 & -    & 3.70E-3 & -    & 9.40E-4 \\
        $80 \times 80$   & 1.58E-4 & 2.93 & 4.72E-4 & 2.97 & 4.20E-5 \\
        $160 \times 160$ & 2.37E-5 & 2.74 & 8.83E-5 & 2.42 & 1.76E-6 \\
        $320 \times 320$ & 3.57E-6 & 2.73 & 1.42E-5 & 2.64 & 7.70E-8 \\
        \bottomrule
    \end{tabular}
\end{table}

\subsection{Two-dimensional air shock-helium bubble interaction problem}
In this case, 
we simulate the classic shock-bubble interaction 
problem originally conducted by Hass and 
Sturtevant \cite{haas1987interaction}, 
where a Mach 1.22 planar shock wave impinges upon a helium bubble. 
This configuration has been 
widely adopted as a standard benchmark \cite{yan2024uniformly,zhang2023analysis} 
to evaluate the capability of numerical methods in resolving complex 
wave interactions and interfacial instabilities. 
The computational domain and the initial locations 
of the bubble and the incident shock 
wave are depicted in Figure~\ref{fig:shock_bubble_init_he}. 
The initial conditions are prescribed as follows
\begin{align*}
&(\rho_1, \rho_2, u, v, p, z_1) 
\\
&=\begin{cases} 
(1.4, 0.25463, 0, 0, 10^{5}, 10^{-6}), & \sqrt{x^2 + y^2} \le 0.025, \\
(1.92691, 0.25463, -113.5, 0, 1.5698 \times 10^{5}, 1 - 10^{-6}), & x \ge 0.05, \\
(1.4, 0.25463, 0, 0, 10^{5}, 1 - 10^{-6}), & \text{otherwise},
\end{cases}
\end{align*}
where the parameters 
are given by $\gamma_{1}=1.4$, $p_{w,1}=0.0$ and $\gamma_{2}=1.648$, $p_{w,2}=0.0$. 
The computational domain is discretized 
using a mesh consisting of $1800 \times 356$ elements. 
Regarding the boundary conditions, 
solid wall conditions 
are enforced on the upper and lower boundaries. 
An outflow boundary condition is applied at the left boundary, 
while the exact post-shock state is constantly prescribed at the right boundary.

Figure~\ref{fig:shockair_hithe_comparison} 
presents a comparison between the 
computed density contours using the BP-OEDG($P^2$) scheme 
and the experimental results at $t = 62$, $245$, $427$, and $983~\mu\text{s}$ 
(measured relative to the instant when the 
incident shock first impinges upon 
the bubble \cite{yan2024uniformly,shyue2014eulerian}). 
It is evident that 
the morphological evolution of the helium bubble agrees 
well with the experimental observations at each corresponding instant. 
Furthermore, the shock waves and 
contact discontinuities are accurately resolved 
by the BP-OEDG($P^2$) scheme without any spurious oscillations.
\begin{figure}[H]
  \centering
  \begin{minipage}[b]{0.8\textwidth}
    \centering
    \includegraphics[width=\linewidth]{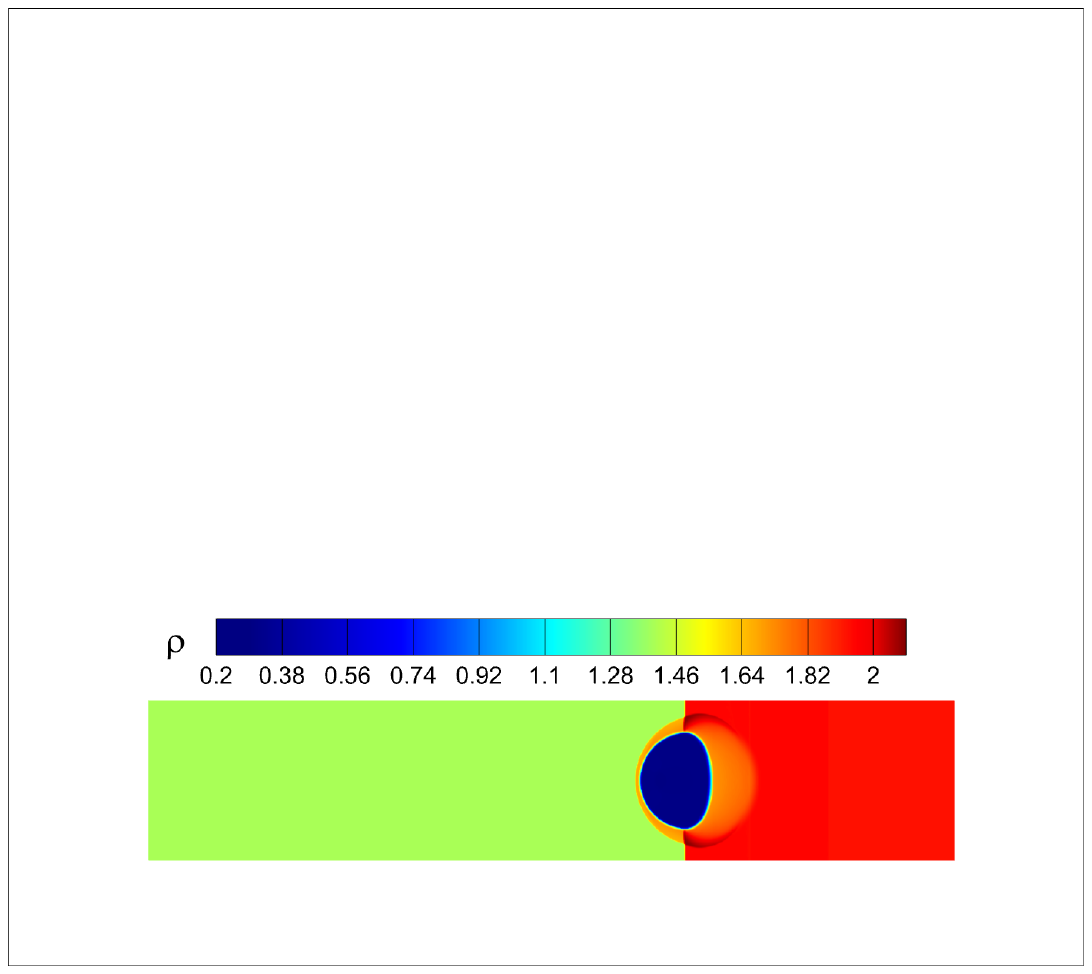}
  \end{minipage}
  \hfill
  \begin{minipage}[b]{0.16\textwidth}
    \centering
     \raisebox{3.6mm}{\includegraphics[width=\linewidth]{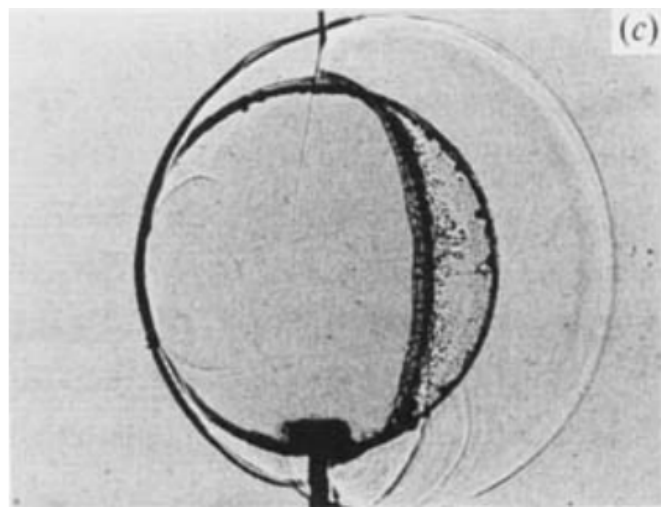}}
  \end{minipage}
  \caption*{$t = 62$ $\mu$ s.}
  \par\medskip
  
  \begin{minipage}[b]{0.8\textwidth}
    \centering
    \includegraphics[width=\linewidth]{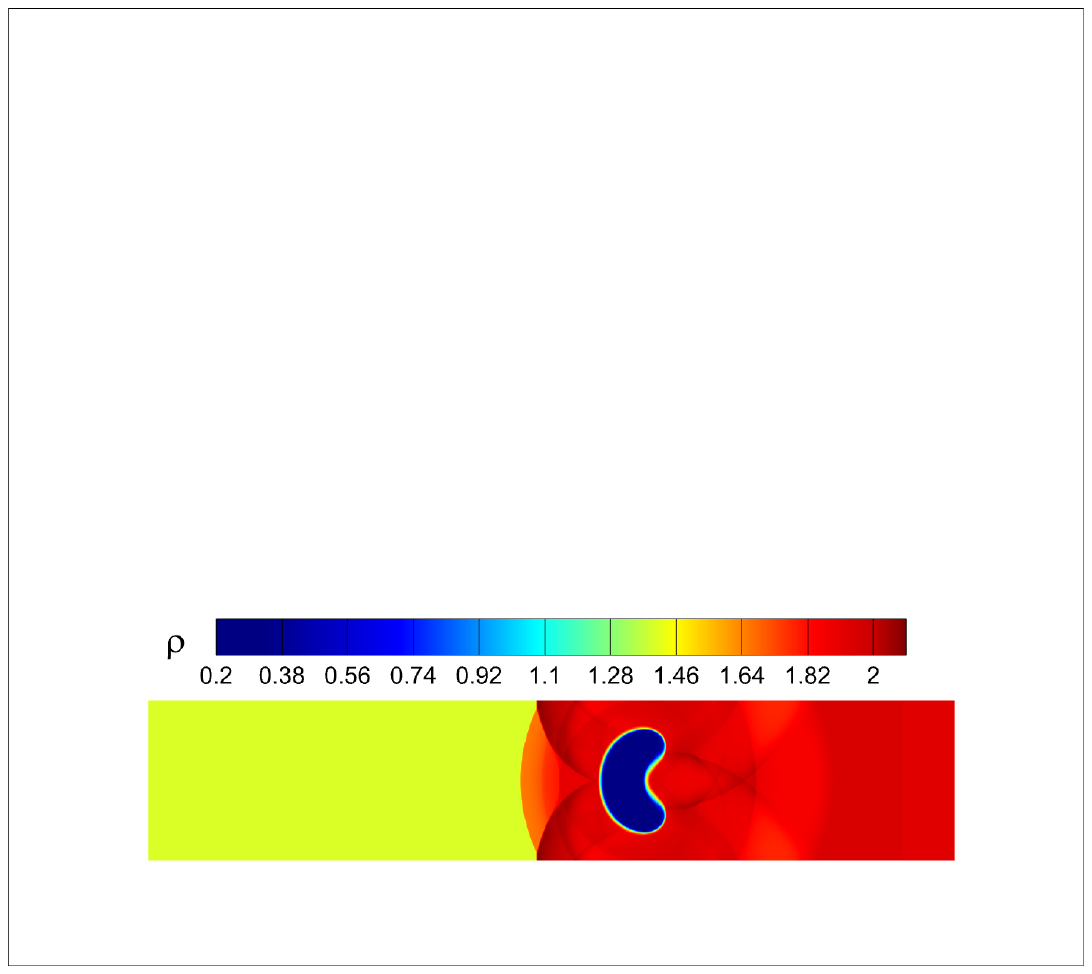}
  \end{minipage}
  \hfill
  \begin{minipage}[b]{0.16\textwidth}
    \centering
    \raisebox{3.6mm}{\includegraphics[width=\linewidth]{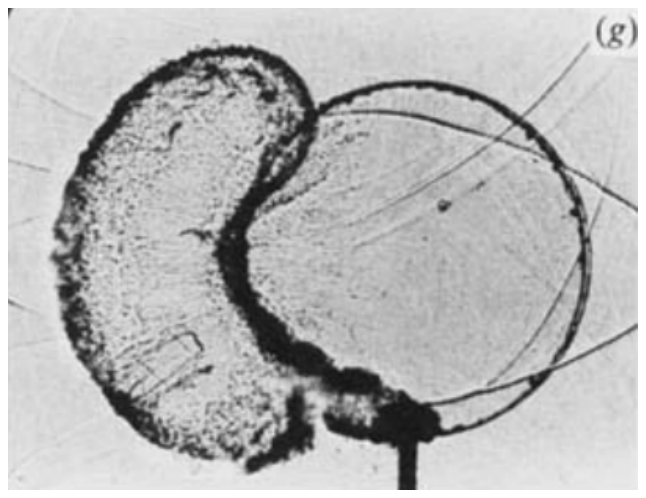}}
  \end{minipage}
  \caption*{$t = 245$ $\mu$ s.}
  \par\medskip
  
  \begin{minipage}[b]{0.8\textwidth}
    \centering
    \includegraphics[width=\linewidth]{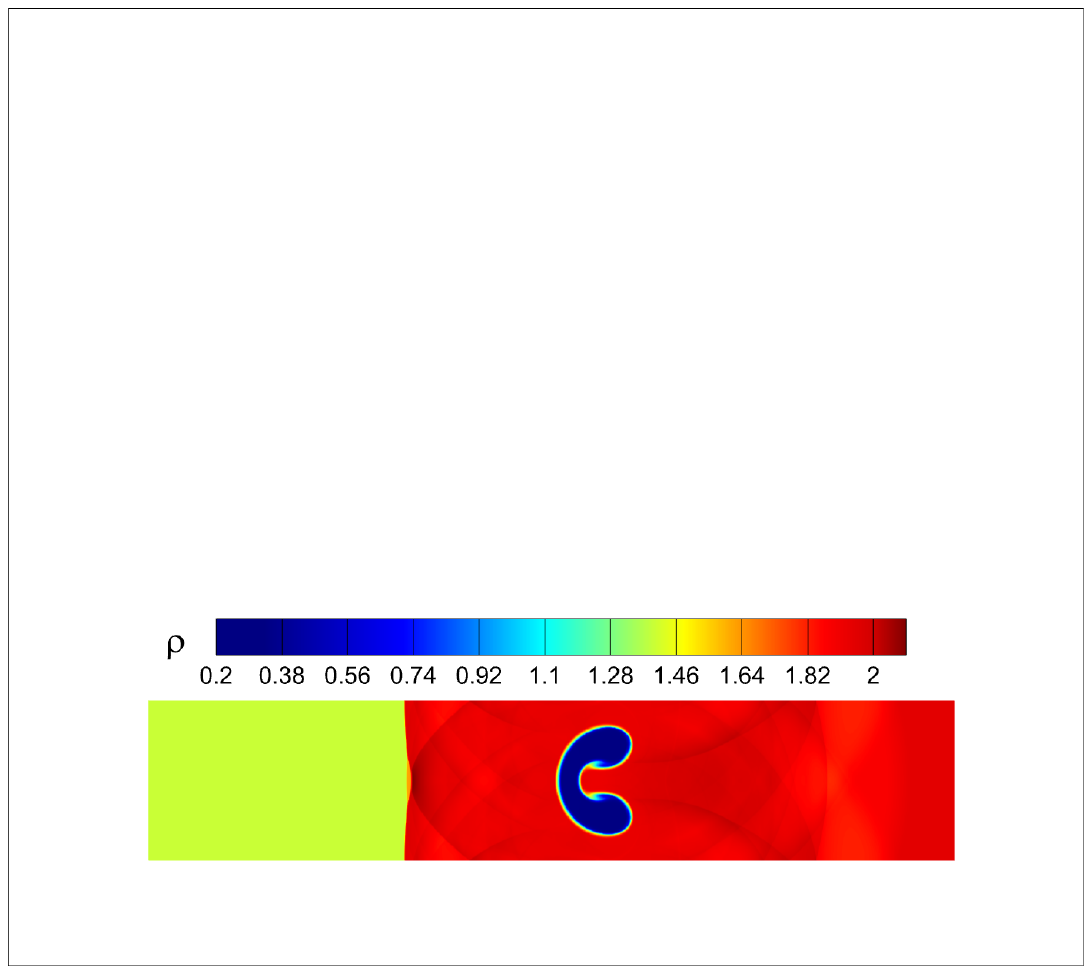}
  \end{minipage}
  \hfill
  \begin{minipage}[b]{0.16\textwidth}
    \centering
    \raisebox{3.6mm}{\includegraphics[width=\linewidth]{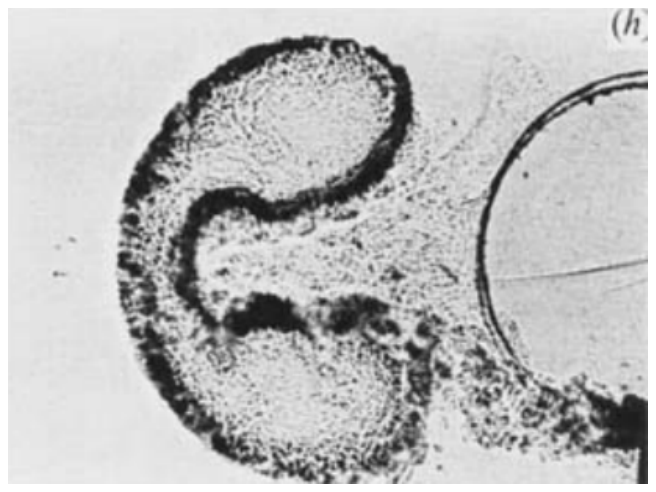}}
  \end{minipage}
  \caption*{$t = 427$ $\mu$ s.}
  \par\medskip
  
  \begin{minipage}[b]{0.8\textwidth}
    \centering
    \includegraphics[width=\linewidth]{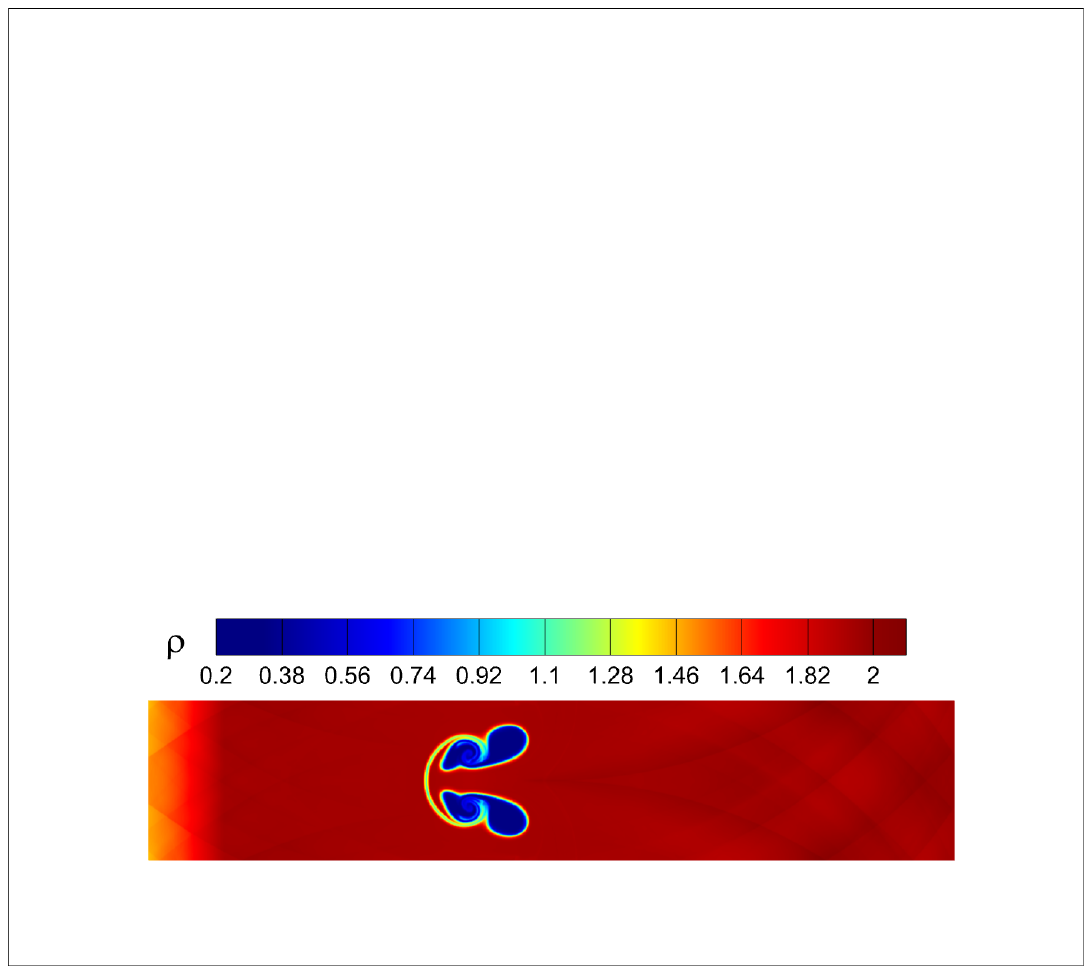}
  \end{minipage}
  \hfill
  \begin{minipage}[b]{0.16\textwidth}
    \centering
    \raisebox{3.6mm}{\includegraphics[width=\linewidth]{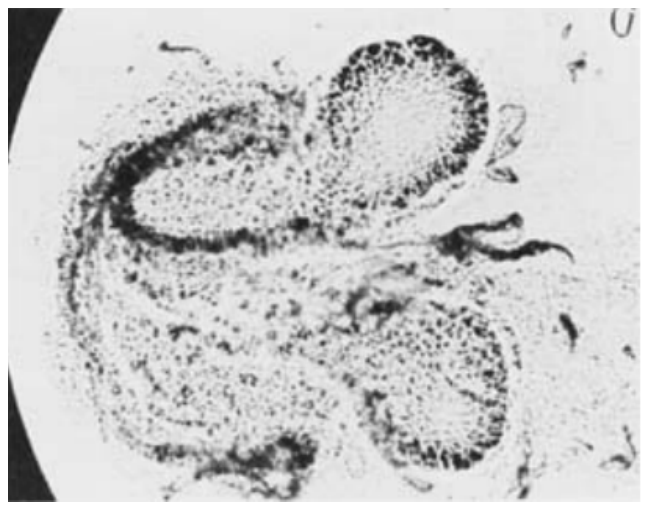}}
  \end{minipage}
  \caption*{$t = 983$ $\mu$ s.}
  
  \caption{Air shock hitting helium bubble: BP-OEDG ($P^2$) vs experiment.}
  \label{fig:shockair_hithe_comparison}
\end{figure}

\subsection{Two-dimensional underwater explosion problem}
In this case, the explosion of a
highly compressed cylindrical air bubble in water under a free surface 
is investigated. This problem can be used to verify the rubustness of the BP-OEDG method. 
This problem has been widely studied in the 
previous literature \cite{zhang2023analysis,white2025high,cheng2020quasi,shukla2010interface}.
The primary computational challenge 
of this benchmark lies in capturing 
the topological changes of the air-water 
interface under extreme density and pressure ratios. 
Although shock reflection at 
the free surface potentially triggers 
cavitation, the present 
study neglects cavitation effects to 
focus strictly on the interface tracking and 
high-order resolution of multi-phase dynamics. 
The initial conditions are as follows:
\begin{equation*}
(\rho_1, \rho_2, u, v, p, z_1) = 
\begin{cases}
(0.001225, 1.0, 0, 0, 1.01325, 10^{-6}), & \quad \text{for water}, \\
(0.001225, 1.0, 0, 0, 1.01325, 1.0 - 10^{-6}), & \quad \text{for air}, \\
(1.25, 1.0, 0, 0, 10000, 1.0 - 10^{-6}), & \quad \text{for air bubble},
\end{cases}
\end{equation*}
where the parameters are $\gamma_1 = 1.4$, $p_{w,1} = 0.0$ 
and $\gamma_2 = 4.4$, $p_{w,2} = 6000$. 
The test is investigated within 
a rectangular domain $ [-2, 2] \times [-1.5, 1.5]$ with wall boundary conditions on all sides, 
which is spatially discretized by 
a uniform grid composed of $600 \times 450$ rectangular cells. 
Initially, the air-water interface 
is aligned with the horizontal line $y = 0$. 
An unperturbed air bubble 
is embedded in the liquid phase, 
centered at $(x_c, y_c) = (0.0, -0.3)$ with a 
radius of $r = 0.12$, 
as schematically illustrated in Fig.~\ref{fig:bubble_interface_initial}. 
Figure~\ref{fig:underwater_explosion} displays
the computed pressure and volume fraction contours at different times.
The primary flow features 
are accurately resolved and show 
excellent agreement with the numerical results 
reported in \cite{cheng2020quasi,white2025high}, 
demonstrating the robustness of 
the BP-OEDG method in handling extreme density and pressure ratios.

\begin{figure}[htbp]
    \centering
    \begin{tikzpicture}[scale=0.5, >=stealth]

        \def\xmin{-2}      
        \def\xmax{2}
        \def\ymin{-1.5}   
        \def\ymax{1.5}
        
        \def\scaleX{3}     
        \def\scaleY{3}

        \def\bcX{0.0}      
        \def\bcY{-0.3}     
        \def\rb{0.12}      
        
        \pgfmathsetmacro{\xL}{\xmin * \scaleX}
        \pgfmathsetmacro{\xR}{\xmax * \scaleX}
        \pgfmathsetmacro{\yB}{\ymin * \scaleY}
        \pgfmathsetmacro{\yT}{\ymax * \scaleY}
        
        \pgfmathsetmacro{\cX}{\bcX * \scaleX}
        \pgfmathsetmacro{\cY}{\bcY * \scaleY}
        \pgfmathsetmacro{\rB}{\rb * \scaleX} 
        \pgfmathsetmacro{\yFS}{0.0 * \scaleY} 

        \fill[blue!10] (\xL, \yB) rectangle (\xR, \yFS);
        
        \fill[gray!5] (\xL, \yFS) rectangle (\xR, \yT);
        
        \fill[gray!5, draw=red, thick] (\cX, \cY) circle (\rB);

        \draw[thick] (\xL, \yB) -- (\xL, \yT) -- (\xR, \yT) -- (\xR, \yB);
        
        \draw[ultra thick, line width=2pt] (\xL, \yB) -- (\xR, \yB);
        
        \draw[blue!60!black, ultra thick, dashed] (\xL, \yFS) -- (\xR, \yFS);

        \node[scale=0.9, font=\bfseries] at (-4, 2) {Air};
        \node[scale=0.9, font=\bfseries] at (-4, -2) {Water};
        
        \node[below, scale=0.8] at (\cX, \cY-\rB-0.2) {Bubble};

    \end{tikzpicture}
    \caption{Schematic of the initial configuration for the underwater explosion problem.}
     \label{fig:bubble_interface_initial}
 \end{figure}

\begin{figure}[H]
  \centering
  \begin{minipage}[b]{0.45\textwidth}
    \centering
    \includegraphics[width=\linewidth]{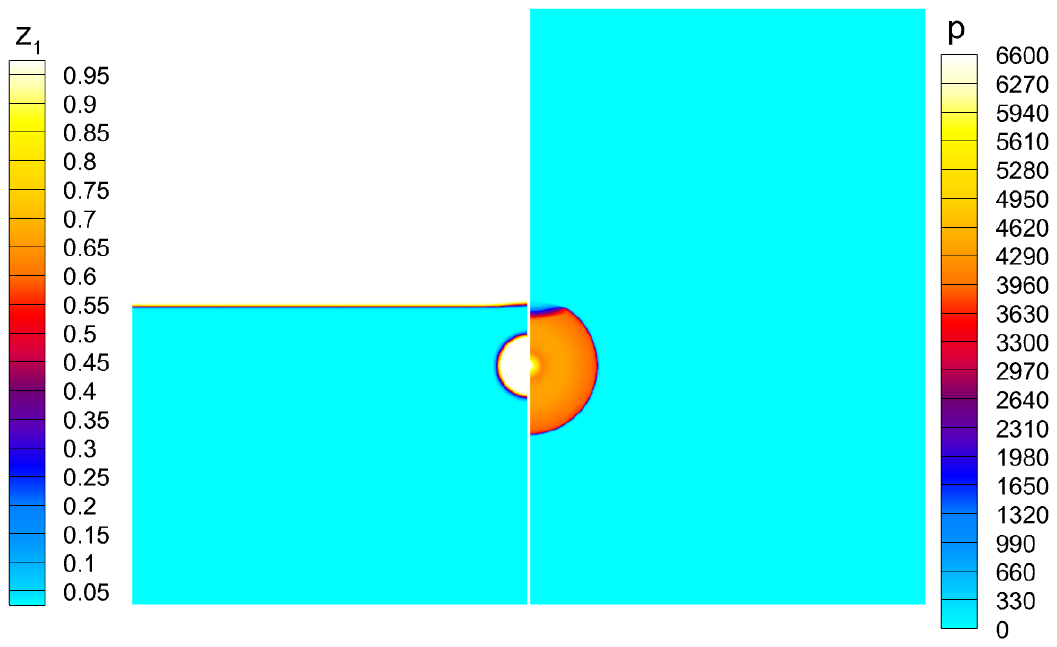}
    \caption*{$t=0.001$}
  \end{minipage}
  \hfill
  \begin{minipage}[b]{0.45\textwidth}
    \centering
    \includegraphics[width=\linewidth]{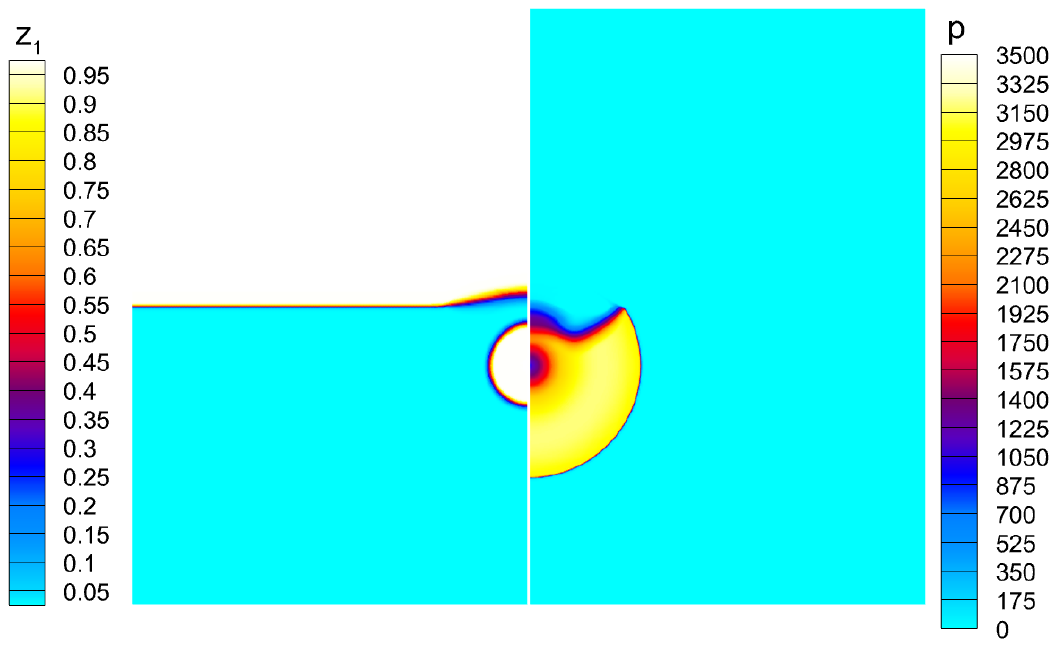}
    \caption*{$t=0.002$}
  \end{minipage}

  \par\medskip

  \begin{minipage}[b]{0.45\textwidth}
    \centering
    \includegraphics[width=\linewidth]{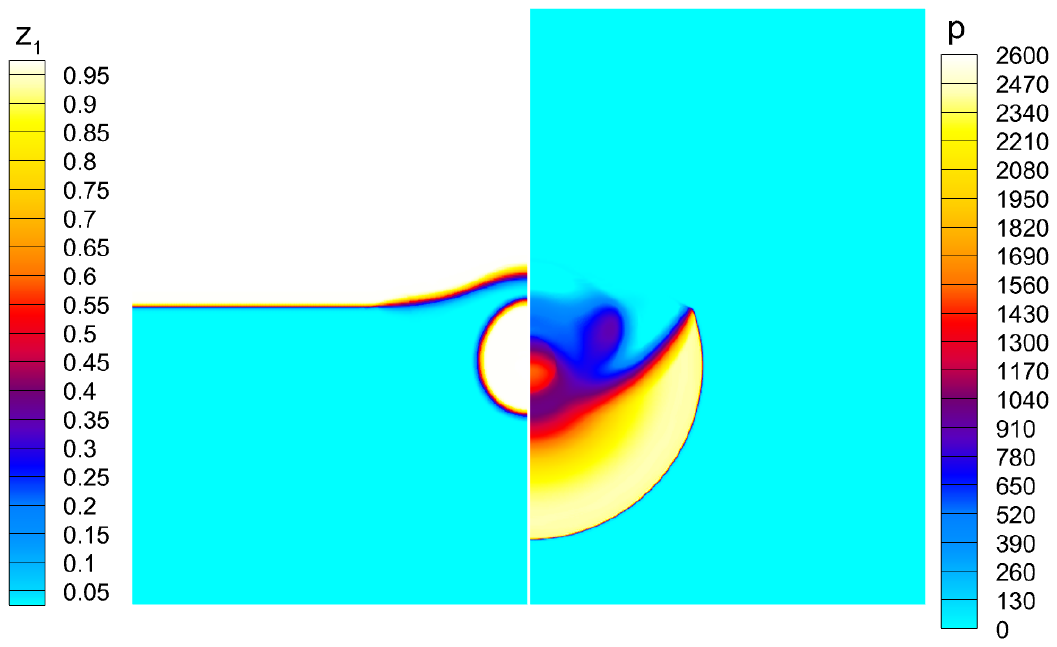}
    \caption*{$t=0.004$}
  \end{minipage}
  \hfill
  \begin{minipage}[b]{0.45\textwidth}
    \centering
    \includegraphics[width=\linewidth]{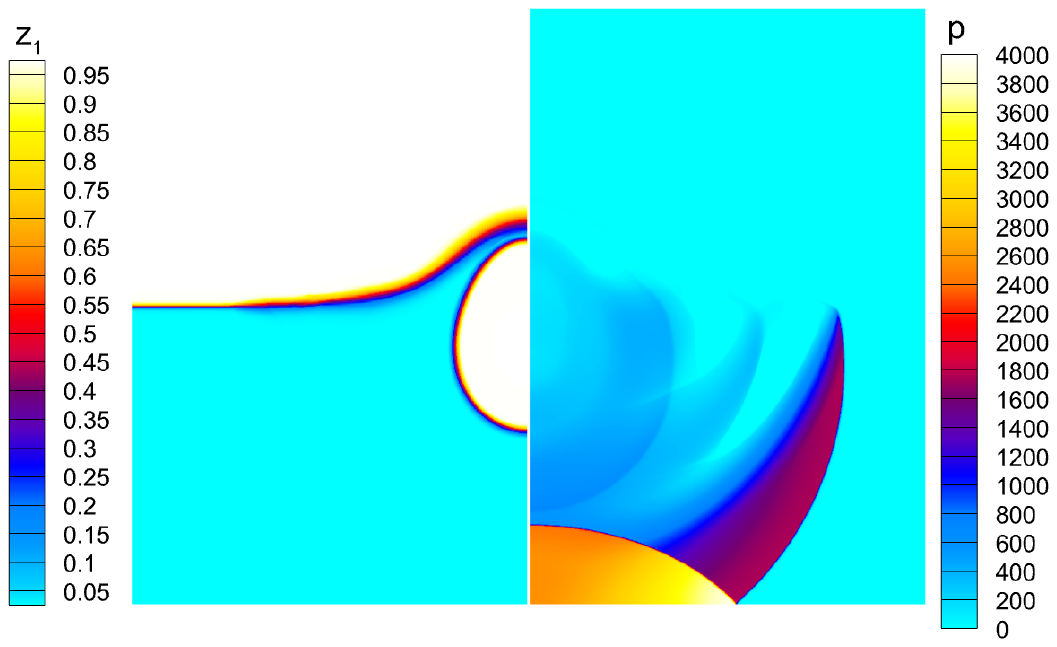}
    \caption*{$t=0.008$}
  \end{minipage}
  \caption{The pressure (right half of each figure) and the volume fraction (left half of each figure) are
shown at different times.}
  \label{fig:underwater_explosion}
\end{figure}

\subsection{Two-dimensional water shock-air bubble interaction problem}
This test case \cite{cheng2020quasi,xu2017explicit} considers the 
interaction between an air bubble and a planar shock 
wave propagating in water. The 
computational domain and the 
initial locations of the bubble and 
the incident shock wave are 
illustrated in Figure~\ref{fig:water_shock_bubble_initial}. 
The initial conditions are prescribed as follows
\begin{align*}
&(\rho_1, \rho_2, u, v, p, z_1) 
\\ &= 
\begin{cases} 
(0.0012, 1.0, 0, 0, 1.0, 1.0 - 10^{-6}), & \sqrt{(x-6)^2 + (y-6)^2} \le 3, \\
(0.0012, 1.325, -68.525, 0, 19153.0, 10^{-6}), & x \ge 11.4, \\
(0.0012, 1.0, 0, 0, 1.0, 10^{-6}), & \text{otherwise},
\end{cases}
\end{align*}
where the parameters are $\gamma_1 = 1.4$, $p_{w,1} = 0.0$ 
and $\gamma_2 = 4.4$, $p_{w,2} = 6000$. 
The computational domain is discretized using 
a uniform mesh consisting of $800 \times 800$ elements. 
Regarding the boundary conditions, solid wall conditions 
are enforced on the upper and lower boundaries. 
An outflow condition is applied at the left boundary, 
while the exact post-shock state is continuously prescribed at the right boundary. 
Figure~\ref{fig:water_shock_air_sequence} 
displays the computed density and volume 
fraction contours at $t = 0.015$, $0.02$, $0.025$, $0.03$, $0.035$, and $0.04$. 
Consistent with the numerical results reported in the 
literature \cite{chen2025generalized}, 
the major interfacial features and complex 
wave structures are accurately resolved 
by the BP-OEDG($P^2$) scheme without any spurious oscillations.
\begin{figure}[htbp]
    \centering
    \begin{tikzpicture}[scale=0.3, >=stealth]

        \def\side{12}      
        \def\bc{6}         
        \def\rb{3}         
        \def\xs{11.4}      

        \fill[blue!5] (0,0) rectangle (\side, \side);
        
        \fill[blue!20] (\xs, 0) rectangle (\side, \side);
        
        \fill[white, draw=black, thick] (\bc, \bc) circle (\rb);

        \draw[thick] (0,0) rectangle (\side, \side);
        
        \draw[red, ultra thick] (\xs, 0) -- (\xs, \side);
        
        \draw[->, ultra thick, red] (\xs+1.8, \bc) -- (\xs-0.9, \bc) 
            node[midway, right=15pt, red, scale=0.8] {Shock};

        \node[scale=0.8] at (\bc-1, \bc) {Air};
        \node[scale=0.8] at (\bc-1, \bc+4) {Water};        

        \draw[->] (0,-0.5) -- (\side+1, -0.5) node[right] {$x$};
        \node[below] at (0, -0.5) {$0$};
        \node[below] at (\bc, -0.5) {$6$};
        \node[below] at (\side, -0.5) {$12$};
        
        \draw[red, thick] (\xs, \side) -- (\xs, \side+0.2); 
        \node[above, red, scale=0.9, font=\bfseries] at (\xs, \side+0.2) {$11.4$};

        \draw[->] (-0.5, 0) -- (-0.5, \side+1) node[above] {$y$};
        \node[left] at (-0.5, 0) {$0$};
        \node[left] at (-0.5, \bc) {$6$};
        \node[left] at (-0.5, \side) {$12$};

        \draw[<->, dashed] (\bc, \bc) -- (\bc+\rb, \bc) node[midway, above, scale=0.7] {$R=3$};

    \end{tikzpicture}
    \caption{Initial configuration of shock-bubble interaction in water.}
    \label{fig:water_shock_bubble_initial}
\end{figure}

\begin{figure}[H]
  \centering
  \begin{minipage}[b]{0.31\textwidth}
    \centering
    \includegraphics[width=\linewidth]{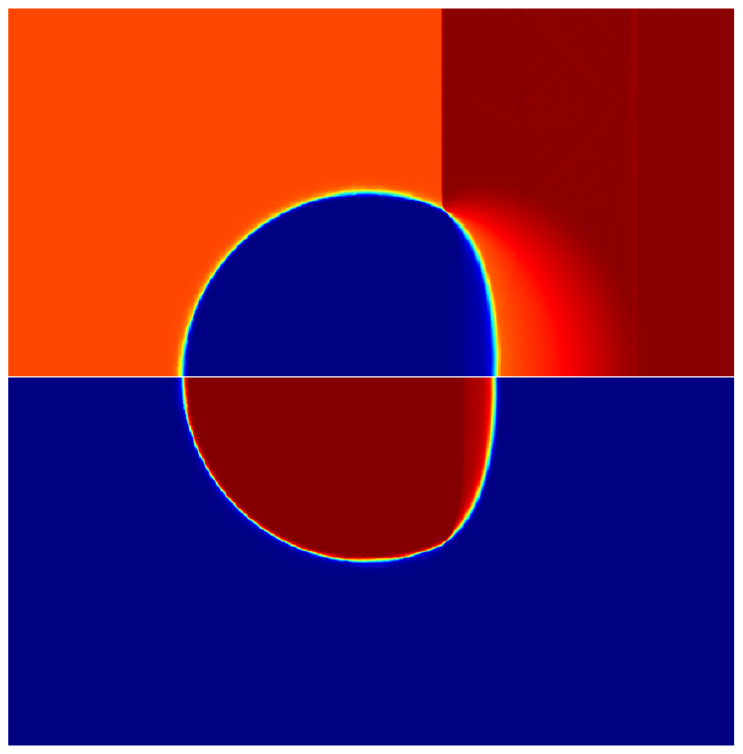}
    \caption*{$t=0.015$}
  \end{minipage}
  \hfill
  \begin{minipage}[b]{0.31\textwidth}
    \centering
    \includegraphics[width=\linewidth]{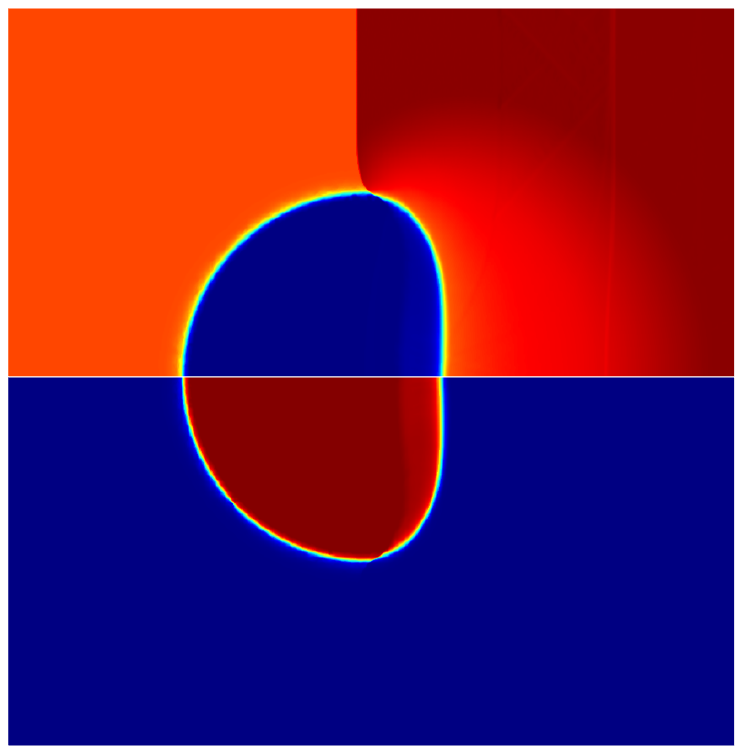}
    \caption*{$t=0.02$}
  \end{minipage}
  \hfill
  \begin{minipage}[b]{0.31\textwidth}
    \centering
    \includegraphics[width=\linewidth]{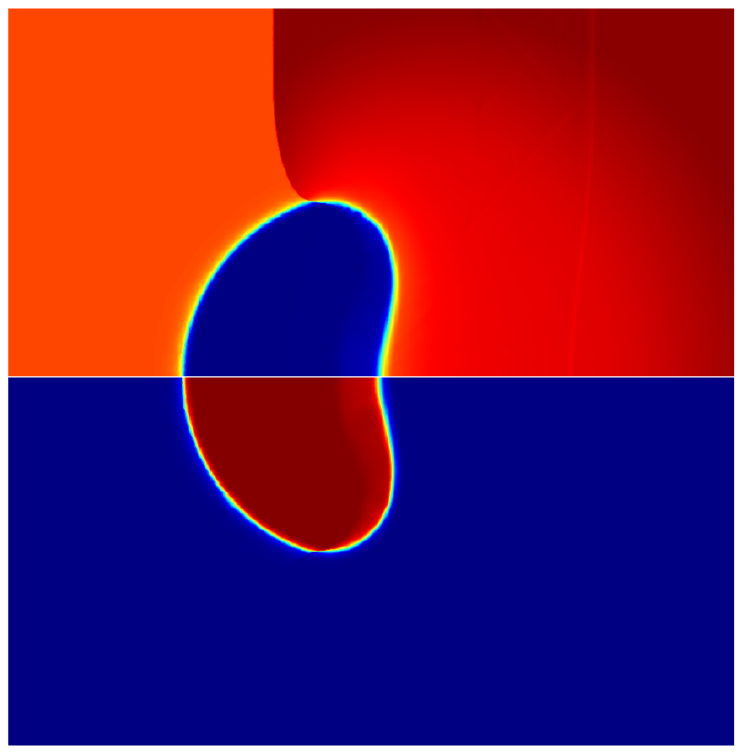}
    \caption*{$t=0.025$}
  \end{minipage}
  \par\medskip
  \begin{minipage}[b]{0.31\textwidth}
    \centering
    \includegraphics[width=\linewidth]{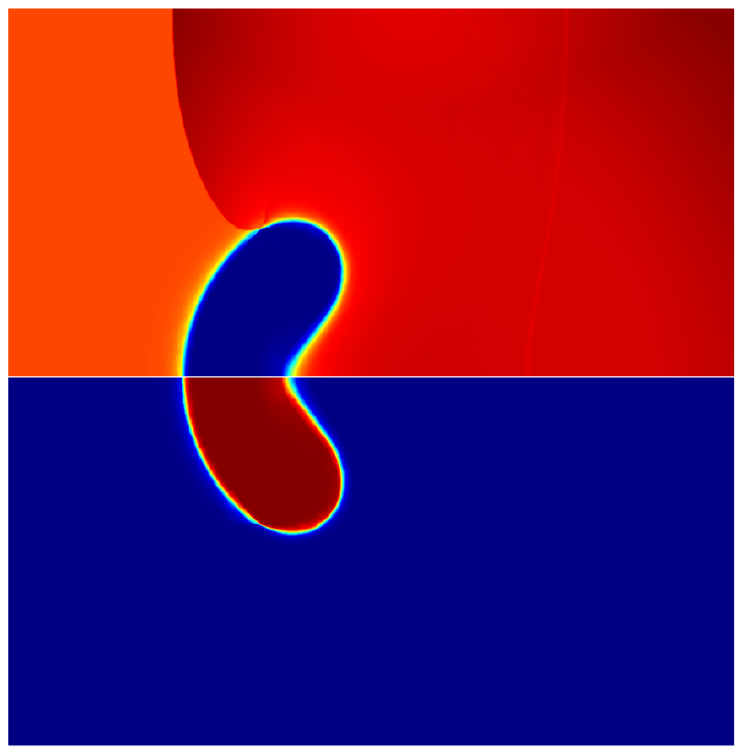}
    \caption*{$t=0.03$}
  \end{minipage}
  \hfill
  \begin{minipage}[b]{0.31\textwidth}
    \centering
    \includegraphics[width=\linewidth]{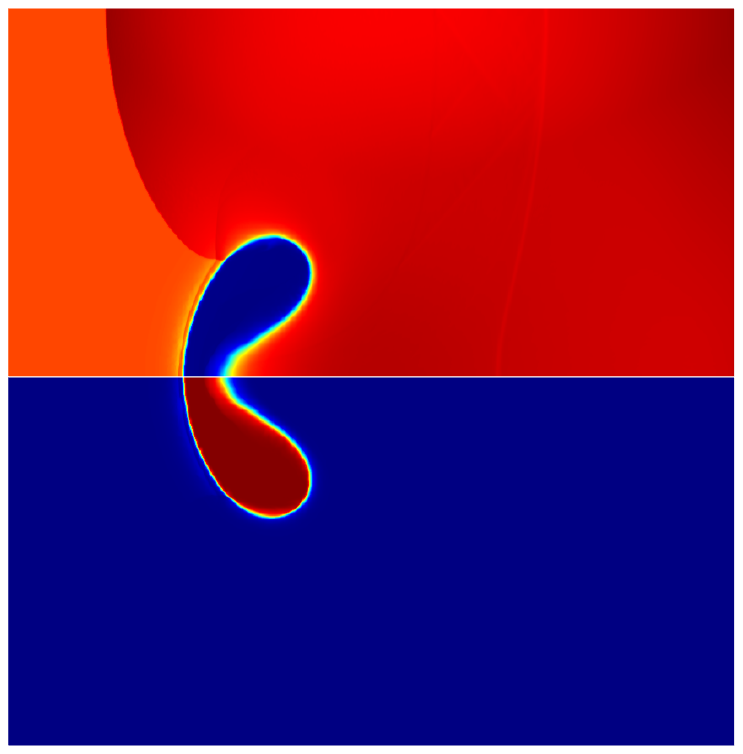}
    \caption*{$t=0.035$}
  \end{minipage}
  \hfill
  \begin{minipage}[b]{0.31\textwidth}
    \centering
    \includegraphics[width=\linewidth]{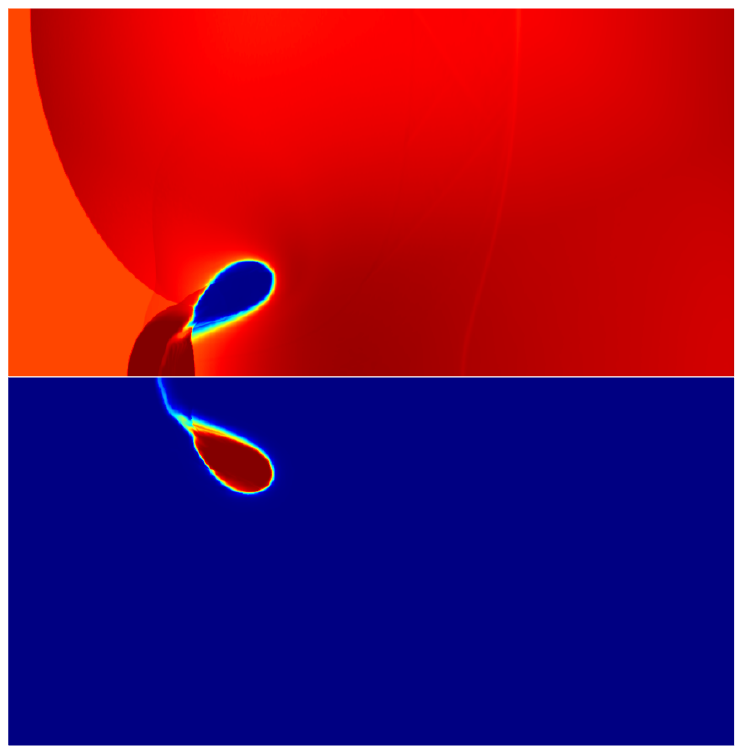}
    \caption*{$t=0.04$}
  \end{minipage}
  \caption{The mixture density (upper half of each figure) and the volume fraction (lower half of each figure) are
shown at different times.}
  \label{fig:water_shock_air_sequence}
\end{figure}

\section{Conclusion}\label{sec-conclusion}

In this paper, we propose a robust operator-splitting 
discontinuous Galerkin framework 
combined with an OEDG procedure and a BP limiter 
to solve Kapila's five-equation model for 
compressible two-phase flows. 
Crucially, by decoupling the stiff $\kappa$-source term, 
we introduce a novel adaptive implicit 
strategy hybridizing the backward Euler and 
second-order singly diagonally implicit Runge-Kutta schemes to 
resolve the stiffness-induced instabilities without severe time-step penalties. 
The proposed method not 
only mitigates the stiffness-induced instability 
associated with the $\kappa$ source term but also 
guarantees oscillation-free behavior, 
bound preservation, and strict satisfaction of the Abgrall condition. 
Numerical results validate its effectiveness in simulating gas-gas 
and gas-water two-phase flows under severe conditions. 
Future work will focus on overcoming the local order 
degradation issue \cite{sportisse2000analysis,descombes2004operator} 
induced by the operator splitting approach. 

\section{Acknowledgments}
This work was supported by the National Natural 
Science Foundation of China (Grant No. 52525102). 

The authors would like to express 
their gratitude to 
Prof. Ruifang Yan (Huazhong University of Science and Technology) and 
Dr. Xinyu Li (Xi'an Jiaotong University) for their valuable discussions.

\bibliographystyle{elsarticle-num} 
\bibliography{tex}
\end{document}